\newcommand{\C}{\mathbb{C}}
\newcommand{\Q}{\mathbb{Q}}
\newcommand{\Z}{\mathbb{Z}}
\newcommand{\ra}{\rightarrow}
\newtheorem{thm}{Theorem}
\newtheorem{df}[thm]{Definition}
\newtheorem{prop}[thm]{Proposition}
\newtheorem{conj}[thm]{Conjecture}
\begin{document}
\title{On the zero locus of normal functions and the \'etale Abel-Jacobi map}

\author{Fran\c{c}ois Charles}

\maketitle

\begin{abstract}
In this paper, we investigate questions of an arithmetic nature related to the Abel-Jacobi map. We give a criterion for the zero locus of a normal function to be defined over a number field, and we give some comparison theorems with the Abel-Jacobi map coming from continuous \'etale cohomology.
\end{abstract}

\section{Introduction}

Let $X\ra S$ be a family of complex smooth projective varieties over a quasi-projective base, and let $Z\hookrightarrow X$ be a flat family of cycles of pure codimension $i$ which are homologically equivalent to zero in the fibers of the family. For any point $s$ of $S$, the Abel-Jacobi map associates to the cycle $Z_s$ a point in the intermediate Jacobian $J^i(X_s)$ of $X_s$, which is a complex torus (see part 2 for details). This construction works in family, yielding a bundle of complex tori, the Jacobian fibration $J^i(X/S)$, and a normal function $\nu_Z$, which is the holomorphic section of $J^i(X/S)$ associated to $Z\hookrightarrow X$. We can attach to $\nu_Z$ an admissible variation $H$ of mixed Hodge structures on $S$, see \cite{Sainf}, fitting in an exact sequence
$$0\ra R^{2i-1}f_*\Z/(\mathrm torsion) \ra H \ra \Z$$ 
such that the zero locus of $\nu_Z$ is the locus where this exact sequence splits, that is, the projection on $S$ of the locus of Hodge classes in $H$ which map to $1$in $\Z$. In analogy with the celebrated theorem of Cattani-Deligne-Kaplan on the algebraicity of Hodge loci for variations of pure Hodge structures, Green and Griffiths have stated the following conjecture, which deals with the mixed Hodge structure appearing above.

\begin{conj}
 The zero locus of the normal function $\nu$ is algebraic.
\end{conj}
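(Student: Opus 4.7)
The plan is to realize the zero locus of $\nu$ as a Hodge locus inside the admissible variation of mixed Hodge structures $H$, and then to adapt the Cattani-Deligne-Kaplan strategy for algebraicity of Hodge loci to this mixed setting. As noted above, a point $s\in S$ lies in the zero locus of $\nu_Z$ precisely when the extension $0\ra (R^{2i-1}f_*\Z)_s/\mathrm{torsion} \ra H_s \ra \Z \ra 0$ splits, equivalently when there exists an integral Hodge class in $H_s$ mapping to $1\in\Z$. Thus $Z(\nu)$ is the image under the projection from the total space of $H$ to $S$ of the locus of integral $(0,0)$-classes lifting $1$.

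First I would prove that $Z(\nu)\subset S$ is a closed complex analytic subvariety. Locally on $S$, one combines a multivalued flat integral frame for $H$ with the holomorphic subbundle $F^0H$; using the Deligne splitting available fibrewise, one produces a holomorphic map whose vanishing locus is exactly $Z(\nu)$. This is essentially the classical argument for the analyticity of Hodge loci, adapted to handle the mixed situation, and presents no serious difficulty.

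The substantive step is to promote analyticity to algebraicity. I would fix a smooth projective compactification $\bar S \supset S$ with simple normal crossings boundary $D=\bar S\setminus S$, and try to prove that the closure of $Z(\nu)$ in $\bar S$ is analytic; Chow's theorem and GAGA then finish the argument. The strategy for extension across $D$ is to analyse the period map of $H$ on a punctured polydisc $\Delta^{*k}$ near a boundary point: admissibility of $H$ provides a limit mixed Hodge structure, and a nilpotent orbit theorem in the mixed setting approximates the period map by a nilpotent orbit. One then argues that the existence of an integral Hodge class lifting $1$ confines the base point to a proper analytic subvariety of $\Delta^k$, using Cattani-Deligne-Kaplan type estimates to bound the norms of admissible integral lifts as the boundary is approached.

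The main obstacle is precisely this boundary analysis. Unlike the pure case, one must keep track of the relative weight filtration and invoke the SL$_2$-orbit theorem for admissible variations of mixed Hodge structures; the extension class of the limit mixed Hodge structure governs the asymptotic behaviour of $\nu$, and the delicate point is to rule out that infinitely many distinct integral classes become simultaneously Hodge along sequences approaching the boundary, which would obstruct analytic extension. Once this asymptotic control is in place, combining analyticity on $S$, extension across $D$, and Chow's theorem delivers the algebraicity of $Z(\nu)$.
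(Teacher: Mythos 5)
The statement you are addressing is stated in the paper as Conjecture 1, attributed to Green and Griffiths; the paper does not prove it, and in fact all of the paper's theorems take the algebraicity of the components of the zero locus as a \emph{hypothesis}. At the time of writing only partial cases were known (the results of Brosnan--Pearlstein and M.~Saito in \cite{BPg} and \cite{Saialg}, for $S$ admitting a compactification whose complement is a smooth divisor). So there is no proof in the paper to compare yours against; what can be assessed is whether your sketch constitutes a proof on its own.

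It does not. The first two steps of your outline --- the description of $Z(\nu)$ as the locus of integral Hodge classes in $H$ lifting $1\in\Z$, and the analyticity of this locus inside $S$ --- are correct and standard, and your overall strategy (extend the closure of $Z(\nu)$ analytically across a normal crossings boundary $D=\bar S\setminus S$, then conclude by Chow and GAGA) is indeed the route followed in the subsequent literature. But the entire mathematical content of the conjecture is concentrated in the step you label ``the main obstacle'' and then assume. In the mixed setting the Cattani--Deligne--Kaplan norm estimates do not transfer directly: one needs a multivariable SL$(2)$-orbit theorem for admissible variations of mixed Hodge structure (available at the time only in one variable, which is exactly why \cite{BPg} and \cite{Saialg} are restricted to smooth boundary divisors), and one must control the \emph{singularities} of the normal function along $D$, that is, the possible nonvanishing of the image of the extension class in local intersection cohomology near a boundary point --- a phenomenon with no analogue in the pure case, and the precise mechanism by which infinitely many integral classes could become Hodge along sequences approaching $D$. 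Writing ``once this asymptotic control is in place'' assumes precisely what has to be proved; as it stands your argument establishes the analyticity of $Z(\nu)$ in $S$, which was already known, but not its algebraicity.
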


In the same way that Deligne-Cattani-Kaplan's result gives evidence for the Hodge conjecture, this would give evidence for Bloch-Beilinson-type conjectures on filtration on Chow groups, see section 2.1. Actually, it has been recently obtained independently by Brosnan-Pearlstein and M. Saito in \cite{BPg} and \cite{Saialg} that if $S$ has a smooth compactification with complement a smooth divisor, then the zero locus of $\nu$ is algebraic. It seems that Brosnan and Pearlstein have proved algebraicity in case the base is a surface, and that they expect to have a proof of conjecture 1 in full generality soon.

Assume everything is defined over a finitely generated field $k$. In line with general conjectures on algebraic cycles, one would expect that not only the zero locus of $\nu$ is algebraic, but it should be defined over $k$, hence the interest in trying to investigate number-theoretic properties of the zero locus of normal functions. In the context of pure Hodge structures, i.e. that of Deligne-Cattani-Kaplan's theorem, Voisin shows in \cite{Voabs} how this question is related to the question whether Hodge classes are absolute and gives criteria for Hodge loci to be defined over number fields.

In this paper, we want to tackle such questions and also investigate the arithmetic counterpart of normal functions, namely the \'etale Abel-Jacobi map introduced by Jannsen using continuous \'etale cohomology. We give comparison results between the \'etale Abel-Jacobi map and Griffiths' Hodge-theoretic one. Recent work around the same circle of ideas can be found in \cite{GGP}. The use of Terasoma's lemma in this context is very relevant to our work, and the results proved there are closely related to our theorem \ref{gros} (though up to torsion).

\bigskip

Let us state our main results precisely. Start with a subfield $k$ of $\C$ which is generated by a finite number of elements over $\Q$, and let $S$ be a quasi-projective variety over $k$. Let $\pi : X\ra S$ be a smooth family of projective varieties of pure dimension $n$, and let $Z\hookrightarrow S$ be a family of codimension $i$ algebraic cycles of $X$, flat over $S$. Assume that $Z$ is homologically equivalent to $0$ on the geometric fibers of $\pi$. In the paper \cite{Jac}, Jannsen defines continuous \'etale cohomology, which is \'etale $l$-adic cohomology for varieties over non-algebraically closed fields. There is a cycle map from Chow groups to continuous \'etale cohomology. For any point $s$ of $S$ with value in a finitely generated extension $K$ of $k$, let $\overline{X_s}$ be the variety $X_s\times_K \overline{K}$, and $G_K=Gal(\overline{K}/K)$ the absolute Galois group of $K$. The cycle class of $Z_s$ in the continuous \'etale cohomology of $X_s$ gives a class $aj_{\mathaccent 19 et}(Z_s)\in H^1(G_K, \tilde{H}^{2i-1}(\overline{X_s}, \hat{\Z}(i)))$, where $\tilde{H}^{2i-1}(\overline{X_s}, \hat{\Z})$ denotes the quotient of $H^{2i-1}(\overline{X_s}, \hat{\Z})$ by its torsion subgroup, $\hat{\Z}$ being the profinite completion of $\Z$. This cohomology class is obtained by applying a Hoschild-Serre spectral sequence to continuous cohomology. Proposition \ref{inv} shows that the vanishing of this class is independent of the choice of $K$, i.e., it vanishes in $H^1(G_K, \tilde{H}^{2i-1}(\overline{X_s}, \hat{\Z}(i)))$ if and only if it vanishes in $H^1(G_L, \tilde{H}^{2i-1}(\overline{X_s}, \hat{\Z}(i)))$ for any finite type extension $L$ of $K$. This observation appears in \cite{Sch} and, according to one of the referees, is due to Nori.

\bigskip

It would follow from general conjectures on algebraic cycles, whether a combination of the Hodge and Tate conjectures for open varieties or versions of the Bloch-Beilinson conjectures on filtrations on Chow groups, that the zero locus of the normal function associated to the family of cycles induced by $Z_{\C}$ on $X_{\C}\ra S_{\C}$ is precisely the vanishing set of the \'etale Abel-Jacobi map. For the latter, assuming Beilinson's conjecture on Chow groups of varieties over number fields and Lewis' Bloch-Beilinson conjecture of \cite{Le}, one would know that the kernels of both Abel-Jacobi maps are equal to the second step of the unique Bloch-Beilinson filtration on Chow groups, hence that they coincide. Unfortunately, such comparison results seem to be known only for divisors and zero-cycles, where the \'etale Abel-Jacobi map can be computed using the Kummer exact sequence for Picard or Albanese varieties. We are not aware of any result in other codimension. In this paper, we therefore try to tackle such comparison results. We don't prove the general case, but we prove results of two different flavors in the variational case. We obtain such results assuming the algebraicity of the components of the zero locus of normal functions, which does not seem to be known in full generality at the time being, although as explained before it might be obtained in the near future.

In the previous situation, consider the normal function $\nu_Z$ associated to the family $Z_{\C}$ in $X_{\C}$. Its zero locus is an analytic subvariety of $S(\C)$. Our theorems are the following, where the expression "finitely generated field" denotes a field generated by a finite number of elements over its prime subfield -- $\Q$ in our case.

\begin{thm}\label{gros}
\mbox{}
\begin{itemize}
\item[(i)]  Let $T$ be an irreducible component of the zero locus of $\nu_Z$. Assume that $T$ is algebraic and that ${R^{2i-1}\pi_{\C}}_* \C$ has no nonzero global sections over $T$. Let $k$ be a finitely generated field over which $T$ is defined. Then for all point $t$ of $T$ with value in a finitely generated field, the \'etale Abel-Jacobi invariant of $Z_t$ is zero.
\item[(ii)] Assume that for every closed point $s$ of $S$ with value in a number field, the \'etale Abel-Jacobi invariant of $Z_s$ is zero and that ${R^{2i-1}\pi_{\C}}_* \C$ has no nonzero global sections over $S_{\C}$. Then $\nu_Z$ is identically zero on $S_{\C}$.
\end{itemize}
\end{thm}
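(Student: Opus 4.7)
For part (i), the strategy is to use the uniqueness of the Hodge-theoretic splitting provided by the vanishing of $\nu_Z$ on $T$ and the no-global-sections hypothesis, and to transfer it to the \'etale setting by Artin's comparison. I would first reduce to the case $T=S$ by replacing $S$ with a smooth Zariski open of $T$, so that $\nu_Z$ vanishes identically. Then the admissible VMHS extension
$$0\ra R^{2i-1}f_*\Z/\mathrm{tors}\ra H\ra \Z \ra 0$$
splits as VMHS, yielding a flat Hodge-theoretic section $\sigma:\Z\ra H$. Viewed merely as a section of the underlying $\Z$-local systems, $\sigma$ is in fact the unique flat splitting: any two differ by a flat section of the kernel $R^{2i-1}f_*\Z$, which must vanish by the assumption that $R^{2i-1}\pi_{\C*}\C$ has no nonzero global sections on $T$.

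Since the family and cycle are $k$-rational, the extension is defined over $k$ as an extension of lisse $l$-adic sheaves on $S_k$, and Artin's comparison identifies $\sigma$ with a splitting $\sigma_l$ of the corresponding $l$-adic extension on $S_{\overline{k},\mathrm{\acute et}}$. Uniqueness propagates to the \'etale side: for any $\tau\in\mathrm{Gal}(\overline{k}/k)$ the section $\tau\cdot\sigma_l$ splits the same extension, so $\tau\cdot\sigma_l=\sigma_l$, and $\sigma_l$ descends to $S_k$. Pulling back along any point $t$ with values in a finitely generated $K\supset k$ produces a $G_K$-equivariant splitting of the Hochschild-Serre extension whose class is the $l$-adic component of $aj_{\mathrm{\acute et}}(Z_t)$. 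Vanishing for every $l$ and assembly over primes gives the vanishing in $H^1(G_K,\tilde{H}^{2i-1}(\overline{X_t},\hat{\Z}(i)))$.

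For part (ii), the plan is to run the argument of (i) in reverse: the pointwise \'etale vanishing at all number-field-valued points of $S$ (which form a Zariski dense set since $S$ is of finite type over $\Q$) should produce a global Galois-equivariant splitting of the $l$-adic extension over $S_k$; by Artin's comparison this yields a global flat section of the underlying local system of $H_\C$, which by the no-global-sections hypothesis is unique, hence (by admissibility of the VMHS and the rigidity coming from uniqueness) must be a Hodge section, splitting the extension as VMHS and forcing $\nu_Z\equiv 0$. To set this up as a contradiction, assume $\nu_Z\not\equiv 0$; by the algebraicity theorems of Brosnan-Pearlstein and M. Saito, the zero locus $V$ is a proper algebraic subvariety of $S_\C$, and the complement still contains a Zariski dense set of number-field points.

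The main obstacle is the promotion step: classes in continuous \'etale cohomology that vanish pointwise at a dense set of closed points need not vanish globally, in contrast with $H^0$. Controlling this requires combining Proposition \ref{inv} (the independence of the vanishing from the chosen $K$, attributed to Nori) with spreading arguments, and leveraging the rigidity provided by the no-global-sections hypothesis to ensure that the pointwise splittings of the extension glue. I expect this gluing, complicated by the $\mathrm{lim}^1$ terms inherent in continuous \'etale cohomology, to be the technical core of the argument for (ii).
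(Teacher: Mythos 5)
Your part (i) is essentially the paper's argument: the paper likewise extracts, from the identical vanishing of $\nu_Z$ on $T$, a flat section of $H$ mapping to $1$ which is a Hodge class in every fiber, observes that the no-global-sections hypothesis makes the geometric monodromy invariants of $(H_{\mathaccent 19 et})_{\overline{t}}$ isomorphic to $\hat{\Z}$, and concludes that the arithmetic Galois action on this line is trivial, killing $aj_{\mathaccent 19 et}(Z_t)$. The one step you assert rather than prove is the passage from the pointwise vanishing of $\nu_Z$ to an actual flat splitting: the zero locus of $\nu_Z$ is a countable union of analytic subvarieties (one for each local integral lift of $1$), and the paper invokes Baire's theorem to deduce from ``$T$ equals this locus'' that one of these subvarieties is all of $T$, i.e.\ that a single flat section is a Hodge class everywhere. ``The extension splits as VMHS'' does not follow formally from $\nu_Z\equiv 0$; some such argument is needed, though it is standard.

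For part (ii) you have correctly located the obstacle but not the idea that overcomes it, so the proposal is incomplete exactly at the crux. The paper does not glue pointwise splittings over a Zariski dense set of points; it uses Terasoma's lemma (\cite{Ter}, theorem 2): there exists a \emph{single} closed point $s'$ with value in a number field $L$ such that $G_L$ and $\pi_1^{\mathaccent 19 et}(S,\overline{s})$ have the same image in $\mathrm{GL}((H_l)_{\overline{s}})$. The hypothesis applied at that one point produces an element of $(H_l)_{\overline{s}}$ mapping to $1$ and fixed by $G_L$, hence by the whole monodromy group, so the $l$-adic extension splits over $S$ for every $l$ --- no gluing, no $\lim^1$ issues, and no appeal to Brosnan--Pearlstein or Saito (which in any case does not help: density of number-field points in the complement of the zero locus leads nowhere). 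The step you wave at as ``rigidity coming from uniqueness'' is also substantive: one must show that the resulting flat rational section of $H$ is a Hodge class in every fiber, and this is the theorem of the fixed part for variations of mixed Hodge structure (Steenbrink--Zucker, \cite{SZ1}, th.\ 4.1), which endows $H^0(S_{\C},H\otimes\Q)$ with a mixed Hodge structure making it a constant sub-variation, so that the isomorphism with $\Q$ is a morphism of mixed Hodge structures. A final integrality argument, comparing with the split $\Z_l$-extensions for all $l$, upgrades the rational splitting to an integral one and forces $\nu_Z\equiv 0$. Without Terasoma's lemma and the fixed-part theorem, your part (ii) does not go through.
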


\begin{thm}\label{df}
 Let $T$ be an irreducible component of the zero locus of $\nu_Z$. Assume that $T$ is algebraic and that ${R^{2i-1}\pi_{\C}}_* \C$ has no nonzero global sections over $T$. Then $T$ is defined over a finite extension of the base field $k$ and for any automorphism $\sigma$ of $\C$ over $k$, the image of $T$ by $\sigma$ is an irreducible component of $\nu_Z$.
\end{thm}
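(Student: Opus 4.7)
The plan is to combine both parts of Theorem \ref{gros} with the $\mathrm{Aut}(\C/k)$-equivariance of the \'etale Abel-Jacobi map, which is built into its definition via continuous \'etale cohomology of schemes over $k$. Fix $\sigma \in \mathrm{Aut}(\C/k)$ and let $k' \supseteq k$ be a finitely generated field of definition of $T$. Theorem \ref{gros}(i) applied to $T$ yields that $aj_{\text{\'et}}(Z_t) = 0$ for every $t \in T$ with value in a finitely generated subfield of $\C$. Since the \'etale Abel-Jacobi map is defined scheme-theoretically, it commutes with $\sigma$, so $aj_{\text{\'et}}(Z_{\sigma(t)}) = \sigma_\ast aj_{\text{\'et}}(Z_t) = 0$ for all such $t$; hence $aj_{\text{\'et}}$ vanishes at every finitely generated point of $\sigma(T)$, and in particular at every number-field point.

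Next I would apply Theorem \ref{gros}(ii) to the restricted family $\pi^{-1}(\sigma(T)) \ra \sigma(T)$. One must check that $R^{2i-1}\pi_{\C \ast}\C$ has no nonzero global sections over $\sigma(T)$. The hypothesis on $T$ is a dimension statement about the monodromy invariants of the Betti local system; Artin's comparison theorem rephrases it as the vanishing of Galois invariants in the $\ell$-adic local system $R^{2i-1}\pi_\ast \Q_\ell$ on $T_{\overline{k'}}$. Since $\sigma$ induces a $k$-isomorphism between $T$ and $\sigma(T)$ compatible with the formation of \'etale cohomology of the smooth projective family $\pi$, the analogous vanishing holds on $\sigma(T)$, and, via comparison in the other direction, also on its Betti cohomology. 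Theorem \ref{gros}(ii) then gives that $\nu_Z$ vanishes identically on $\sigma(T)^{\mathrm{an}}$. Consequently $\sigma(T)$ is contained in some irreducible component $T'$ of the zero locus; applying the symmetric argument with $\sigma^{-1}$ and $T'$ yields the reverse inclusion and forces $\sigma(T) = T'$, an irreducible component.

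It remains to show that the orbit $\{\sigma(T) : \sigma \in \mathrm{Aut}(\C/k)\}$ is finite, from which standard Galois-theoretic descent for closed subschemes gives that $T$ is defined over a finite extension of $k$. Fix a projective compactification of $S$ over $k$; every Galois conjugate of $T$ has the same Hilbert polynomial $P$, so the orbit sits inside the $\C$-points of the finite-type $k$-scheme $\mathrm{Hilb}^P(\overline{S})$. The no-global-sections hypothesis, preserved along the orbit by the previous step, is a rigidity condition: any nontrivial first-order deformation of $T$ within the zero locus of $\nu_Z$ would produce a nonzero flat section of $R^{2i-1}\pi_{\C \ast}\C|_T$. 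Hence the orbit of $T$ consists of isolated points of this Hilbert scheme and is finite. The main obstacle is precisely this final step: converting the Betti no-global-sections hypothesis into genuine isolatedness in the Hilbert scheme, and verifying that this intrinsic rigidity is preserved along the Galois orbit. Given that, the comparison-theoretic arguments of the first two paragraphs are direct consequences of the scheme-theoretic nature of $aj_{\text{\'et}}$.
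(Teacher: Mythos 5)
Your first two paragraphs follow essentially the same route as the paper: Theorem \ref{gros}(i) on $T$, Galois equivariance of the \'etale Abel--Jacobi map, transfer of the no-global-sections hypothesis to $\sigma(T)$ via the comparison theorem, and Theorem \ref{gros}(ii) to conclude that $\nu_Z$ vanishes on $\sigma(T)(\C)$. Up to that point the argument is sound and matches the paper's chain $(i)\Rightarrow(ii)\Rightarrow(iii)$ in the main equivalence theorem. The problems are in the two remaining steps.

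First, the ``symmetric argument with $\sigma^{-1}$ and $T'$'' does not go through. To run it you would need to apply Theorem \ref{gros}(i) to the component $T'$ of the zero locus containing $\sigma(T)$, and that requires $T'$ to be \emph{algebraic}; but the theorem explicitly does not assume that every component of the zero locus is algebraic (see the remark following the statement), and $T'$ is a priori only an analytic subvariety. Moreover, even set-theoretic equality would not suffice: the paper's remark stresses that $T^{\sigma}$ is a component of the zero locus \emph{as a subscheme}, i.e.\ with its analytic (possibly non-reduced) structure. The paper's actual argument at this point is local and infinitesimal: it identifies the artinian quotients $\mathcal{O}_{V,\sigma(t)}/\mathfrak{m}^n$ of the zero locus at $\sigma(t)$ with those of $T$ at $t$, using the description of Hodge loci via the Gauss--Manin connection and Griffiths transversality together with the algebraicity of the Gauss--Manin connection (Katz--Oda), which makes that description compatible with $\sigma$. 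This is the idea missing from your proposal.

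Second, the field-of-definition step. Your Hilbert-scheme rigidity argument is not substantiated: the claim that a first-order deformation of $T$ inside the zero locus produces a flat section of $R^{2i-1}{\pi_{\C}}_*\C|_T$ is asserted, not proved, and isolatedness of $[T]$ in $\mathrm{Hilb}^P$ does not by itself bound the $\mathrm{Aut}(\C/k)$-orbit (one would still need the relevant locus in the Hilbert scheme to be defined over $k$, or some countability plus closedness statement). The paper's argument is both simpler and complete: the orbit of $T(\C)$ is contained in the zero locus by the first part, this orbit is dense for the usual topology in $T'(\C)$ where $T'$ is the Zariski closure of $T$ over $k$ (a Baire-category argument on very general points), the zero locus is closed analytic, hence $T'(\C)$ lies in the zero locus; since $T$ is a component of the zero locus it is a component of the $k$-variety $T'$, hence defined over a finite extension of $k$. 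I would replace your final paragraph by this density argument.
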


\paragraph{Remark.} In this result, we do not assume that all the components of the zero locus of $\nu_Z$ is algebraic. Furthermore, we are considering the image of $T$ by $\sigma$ as a subscheme of $S$, and we prove that it is, as a subscheme of $S$, a component of the zero locus of the holomorphic normal function $\nu_Z$. This contrasts with the situation in \cite{Voabs}, where similar results were obtained using only the reduced structure on the subschemes considered. The main difference in our setting is that the (mixed) Hodge structures we consider have at most one nonzero Hodge class, up to multiplication by a constant.

\begin{thm}\label{inter}\mbox{}
\begin{itemize}
\item[(i)] Assume that for every closed point $s$ of $S$ with value in a number field, the \'etale Abel-Jacobi invariant of $Z_s$ is zero and that there exists a complex point $s$ of $S$ such that $\nu_Z(s)=0$. Then $\nu_Z$ is identically zero on $S_{\C}$.
\item[(ii)] Let $T$ be an irreducible component of the zero locus of $\nu_Z$. Assume that $T$ is algebraic and that there exists a point $t$ of $T$ such that $aj_{\mathaccent 19 et}(Z_t)$ is zero. Then for all point $t$ of $T$ with value in a finitely generated field, the \'etale Abel-Jacobi invariant of $Z_t$ is zero.
\end{itemize}
\end{thm}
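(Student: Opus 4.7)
My approach is to prove (ii) first by translating the Hodge-theoretic splitting of the variation of MHS on $T$ into a Galois cohomology class over a finitely generated ground field and exploiting Deligne's weight theorem; (i) will follow an analogous strategy on $S$, together with a density input.

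For (ii): since $T$ is algebraic, fix a finitely generated subfield $K_0 \subseteq \C$ over which $X$, $S$, $Z$, $T$ and $t_0$ are all defined; after enlarging $K_0$ we may assume $t_0 \in T(K_0)$. The cycle $Z_T$ packages, via continuous \'etale cohomology, into an extension of lisse $\hat{\Z}$-sheaves on $T_{K_0}$,
\[0 \to \mathcal{H}_T \to \mathcal{E}_T \to \hat{\Z} \to 0,\]
whose Artin-comparison avatar on $T_\C$ is the underlying local system of the admissible variation of MHS attached to $Z_T$. Because $T$ sits in the zero locus of $\nu_Z$, this variation splits on $T_\C$, so the extension class $\mathrm{ext}_T \in H^1(T_{K_0},\mathcal{H}_T)$ dies in $H^1(T_{\bar K_0},\mathcal{H}_T)$. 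The Hochschild--Serre spectral sequence then gives the exact sequence
\[0 \to H^1(G_{K_0}, V^{\mathrm{geom}}) \to H^1(T_{K_0},\mathcal{H}_T) \to H^0(G_{K_0}, H^1(T_{\bar K_0},\mathcal{H}_T)),\]
where $V^{\mathrm{geom}} := H^0(T_{\bar K_0},\mathcal{H}_T)$, so that $\mathrm{ext}_T$ lifts to a unique class $[c] \in H^1(G_{K_0}, V^{\mathrm{geom}})$. For any $t \in T(L)$ with $L$ finitely generated over $K_0$, functoriality identifies $aj_{\mathaccent 19 et}(Z_t) \in H^1(G_L, V_t)$ with the image of $[c]|_{G_L}$ under the $G_L$-equivariant specialization $V^{\mathrm{geom}} \hookrightarrow V_t^{\pi_1^{\mathrm{geom}}} \hookrightarrow V_t$.

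The key input is Deligne's weight theorem: spreading out over a finitely generated $\Z$-algebra, Frobenius eigenvalues on $V_t \otimes \Q_\ell = H^{2i-1}(\overline{X_t},\Q_\ell(i))$ at closed fibers have Weil weight $-1$, so $V_t$ is pure of weight $-1$, and therefore so is every $G_L$-subquotient, in particular $V_t/V^{\mathrm{geom}}$. Consequently $V_t/V^{\mathrm{geom}}$ has no $G_L$-invariants, and the long exact sequence forces $H^1(G_L,V^{\mathrm{geom}}) \hookrightarrow H^1(G_L,V_t)$ to be injective, up to usual care with torsion. Setting $t = t_0$ with $L = K_0$ and using the hypothesis $aj_{\mathaccent 19 et}(Z_{t_0}) = 0$ yields $[c] = 0$, whence $aj_{\mathaccent 19 et}(Z_t) = 0$ for every $t \in T(L)$; Proposition \ref{inv} absorbs the residual dependence on the choice of $L$.

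For (i), the same Leray-plus-weights skeleton will be applied over $S_k$ in place of $T_{K_0}$, but two extra ingredients are required. First, in the absence of the no-global-sections hypothesis of Theorem \ref{gros}(ii), I would use hypothesis (a) together with a density argument in the spirit of Terasoma's lemma (whose relevance is signalled in the introduction in connection with \cite{GGP}) and Proposition \ref{inv} to convert the pointwise vanishings at number-field-valued points into an $\ell$-adic vanishing of the global extension class $\mathrm{ext}_S$. Second, hypothesis (b) combined with the rigidity of the MHS emphasized in the author's remark after Theorem \ref{df} -- a pure weight $2i-1$ Hodge structure admits no $(i,i)$-classes, so any Hodge lift of $1$ is unique wherever it exists -- pins the resulting Betti section to the canonical Hodge lift where that lift exists, and forces the variation of MHS to split globally on $S_\C$, giving $\nu_Z \equiv 0$. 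The main obstacle in (ii) is ensuring purity of the quotient $V_t/V^{\mathrm{geom}}$; in (i) it is the density passage from pointwise to global \'etale vanishing, which requires substantial arithmetic machinery beyond the simple Leray argument.
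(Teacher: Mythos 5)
Your argument for part (ii) is correct, and it takes a genuinely different route from the paper's. The paper also reduces to splitting the extension (\ref{fix}) of $G_k$-modules
$0\to V^{\mathrm{geom}}\to (H_{l,\overline{t}})^{\pi_1^{\mathaccent 19 et}(T_{\C},\overline{t})}\to \Z_l\to 0$,
but it does so by exhibiting, for almost every finite place $\mathfrak{p}$ of good reduction, a Frobenius-fixed \emph{integral} lift of $1$ (the hypothesis at $t_0$ gives an integral $F_{\mathfrak{p}}$-fixed lift in $H_{l,\overline{t}}$, and weight $-1$ forces it to coincide with the rational monodromy-invariant one), and then invokes Serre's local-global splitting criterion from \cite{Se} and \cite{Se2} --- \v{C}ebotarev plus a Lie-algebra argument --- to globalize over $G_k$. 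You instead note that the extension computing $aj_{\mathaccent 19 et}(Z_{t_0})$ is the pushout of (\ref{fix}) along $V^{\mathrm{geom}}\hookrightarrow V_{t_0}$, and that the pushout map $H^1(G_{K_0},V^{\mathrm{geom}})\to H^1(G_{K_0},V_{t_0})$ is injective because the torsion-free quotient $V_{t_0}/V^{\mathrm{geom}}$ is pure of weight $-1$ and so has no Galois invariants. This is exactly the weight argument the paper already uses to prove Proposition \ref{inv}, and it bypasses Serre's theorems entirely; what it loses is the local-global information (splitting detected Frobenius by Frobenius) that the paper's route makes explicit. The steps you leave implicit do check out: the geometric splitting of (\ref{varl}) over $T_{\overline{K_0}}$ follows not from a splitting of the variation of mixed Hodge structures (which is more than you know) but from $[\nu_Z|_T]=\delta(1)=0$ for the underlying local systems plus Artin comparison, and $V_{t}/V^{\mathrm{geom}}$ is indeed torsion-free, so the weight argument applies integrally.

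Part (i) contains a genuine gap. Your first step --- Terasoma's lemma produces a number-field point whose Galois image equals the monodromy image in $\mathrm{GL}((H_l)_{\overline{s}})$, so the hypothesis at that point splits the $l$-adic part of (\ref{varl}) over $S$, killing $[\nu_{\mathaccent 19 et}]$ and hence, by Proposition \ref{comph}, the Hodge class $[\nu_Z]$ --- is exactly the paper's, though you only gesture at it. The problem is the passage from ``$[\nu_Z]=0$ and $\nu_Z(s_0)=0$'' to ``$\nu_Z\equiv 0$''. Vanishing of $[\nu_Z]$ gives a global \emph{flat} integral section $\alpha$ of $H'_{\Z}$ mapping to $1$; the hypothesis at $s_0$ gives a \emph{Hodge} lift $\beta$ of $1$ at $s_0$. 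These differ by an arbitrary element of $H^{2i-1}(X_{s_0},\Z(i))$, which need not be monodromy-invariant, so you can neither conclude that $\alpha_{s_0}=\beta$ nor correct $\alpha$ globally by their difference. Your ``rigidity'' observation --- a weight $-1$ pure Hodge structure has no nonzero weight-$0$ Hodge classes, so a Hodge lift of $1$ is unique \emph{where it exists} --- says nothing about \emph{where} it exists, and hence cannot force the flat section to be a Hodge class at any point other than possibly $s_0$. The missing ingredient is Griffiths' theorem (\cite{Grif2}), which the paper cites at precisely this juncture: a normal function with vanishing Hodge class is a constant section with values in the fixed part $J(H^0(S_{\C},\mathcal{H}))$ of the Jacobian fibration; combined with $\nu_Z(s_0)=0$ the constant is zero. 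Without this (or a substitute such as the Steenbrink--Zucker theorem of the fixed part, which is how the paper closes the analogous step under the no-global-sections hypothesis of Theorem \ref{gros}), your argument for (i) does not close.
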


The lack of symmetry between both Abel-Jacobi maps in our results is frustrating. On the one hand, results of Serre in \cite{Se} give an almost explicit way of computing whether an Abel-Jacobi invariant is zero. This accounts for the second part of theorem \ref{inter}, which uses much stronger number-theoretic arguments than the other results and is the main part of theorem \ref{inter}. We think that the use of results such as Serre's could be useful for more general comparison theorems. On the other hand, while the local structure of zero locus of normal functions is well understood -- it is an analytic variety, and its local description is well described, see \cite{Grif2}, \cite{Voib}, ch. 17-- ,we have very few results on the ``zero locus'' of the \'etale Abel-Jacobi map. We feel that it would be very interesting to prove an \'etale counterpart of the results of \cite{BPg} and \cite{Saialg}. 

\bigskip

In this paper, if $X$ is any variety, the cohomology groups of $X$, whether singular or \'etale, will always be considered modulo torsion, so as to avoid cumbersome notations. The same convention goes for higher direct images.

\paragraph{Acknowledgement} I would like to offer my heartfelt appreciation to Claire Voisin. It is very clear that this paper owes a great deal both to her work in \cite{Voabs} and to the many discussions we had and suggestions she made, notably for the example of theorem \ref{CY}.

\section{Preliminary results on Abel-Jacobi maps}

Let $X$ be a smooth projective variety over a field $k$ of characteristic zero, $\overline{k}$ an algebraic closure of $k$ and $G_k=Gal(\overline{k}/k)$ the absolute Galois group of $k$. In his paper \cite{Belh}, Beilinson constructs a conjectural filtration $F^{\bullet}$ on the Chow groups $CH^i(X)\otimes \Q$ of $X$ with rational coefficients. It is obtained in the following way. Let $MM(k)$ be the abelian category of mixed motives over $k$. There should exist a spectral sequence, Beilinson's spectral sequence
$$E_2^{p,q}=\mathrm{Ext}^p_{MM(k)}(\mathbf{1}, \mathfrak{h}^q(X)(i))\Rightarrow \mathrm{Hom}_{D^b(MM(k))}(\mathbf{1}, \mathfrak{h}(X)(i)[p+q])$$
where $\mathfrak{h}^q(X)$ denotes the weight q part of the image $\mathfrak{h}(X)$ of $X$ in the category of pure motives. For $p+q=2i$, the abutment of this spectral sequence should be canonically isomorphic with $CH^i(X)$, hence the filtration $F$. For weight reasons, a theorem of Deligne in \cite{Deld} would imply that this spectral sequence degenerates at $E_2\otimes\Q$. We get the formula
$$Gr^{\nu}_F CH^i(X)_{\Q}=\mathrm{Ext}^{\nu}_{MM(k)}(\mathbf{1}, \mathfrak{h}^{2i-\nu}(X)(i))\otimes \Q.$$
The existence of such a filtration is also a conjecture of Bloch and Murre.

\subsection{\'Etale cohomology}
The previous construction should have its reflection in the various usual cohomology theories. Let us first consider \'etale cohomology. In the paper \cite{Jac}, Jannsen constructs continuous \'etale cohomology groups with value in the profinite completion $\hat{\Z}$ of $\Z$ for varieties over a field. Those enjoy good functoriality properties and they are equal to the usual \'etale cohomology groups in case the base field is algebraically closed. In particular, there is a Hochschild-Serre spectral sequence
\begin{equation}\label{HS}
 E_2^{p,q}=H^p(G_k, H^q(X_{\overline{k}}, \hat{\Z}(i))\Rightarrow H^{p+q}(X, \hat{\Z}(i))
\end{equation}
as well as a cycle map
$$cl^X : CH^i(X)\ra H^{2i}(X, \hat{\Z}(i)).$$

Those are compatible with the usual cycle map $cl^{X_{\overline{k}}}$ to $H^{2i}(X_{\overline{k}}, \hat{\Z}(i))$. Let $CH^i(X)_{hom}$ be the kernel of $cl^{X_{\overline{k}}}$, that is, the part of the Chow group consisting of those cycles which are homologically equivalent to zero.

\begin{df}
 The map
$$aj_{\mathaccent 19 et} :  CH^i(X)_{hom}\ra H^1(G_k, H^{2i-1}(X_{\overline{k}}, \hat{\Z}(i))$$
induced by the spectral sequence (\ref{HS}) is called the \'etale Abel-Jacobi map.
\end{df}

This map is expected to be the image by some realization functor of the analogous map coming from Beilinson's spectral sequence. As an evidence for this, we can cite Jannsen's result in \cite {JaMS}, lemma 2.7, stating that in the case $k$ is finitely generated, and for any ``reasonable'' category of mixed motives, the filtrations on $CH^i(X)\otimes \Q$ induced by Beilinson's spectral sequence and by the Hochschild-Serre spectral sequence (\ref{HS}) coincide if $cl^X$ is injective -- which is also a conjecture of Bloch and Beilinson. More specifically, if any Bloch-Beilinson filtration (see \cite {Blalg}, \cite{JaMS}, \cite{JaBB}) exists on $CH^i(X)\otimes \Q$ and $cl^X$ is injective, then it has to be the filtration induced by (\ref{HS}).

\bigskip

Our definition of the \'etale Abel-Jacobi map may seem to be highly dependent on the base field $k$, which is not convenient since we expect that for an algebraic cycle $Z$ homologically equivalent to zero, $aj_{\mathaccent 19 et}(Z)$ should reflect geometric properties of $Z$ related to the image of $Z_{\C}$ by the Abel-Jacobi map. The following proposition shows that the vanishing of $aj_{\mathaccent 19 et}(Z)$ does not actually depend of the base field. This would be false had we considered in our definition the torsion part of the cohomology of $X$. The fact that we want the following result to hold is the reason why we have to ignore this torsion part, which is related to arithmetic properties of algebraic cycles, as opposed to their geometric properties. It has been attributed to Nori and appears in a very similar form in \cite{Sch}, lemma 1.4.

\begin{prop}\label{inv}
 Let $X$ be a smooth projective variety over a finitely generated field $k$, and let $Z\in CH_{hom}^i(X)$. Let $K$ be a field which is finitely generated over $k$. Then $aj_{\mathaccent 19 et}(Z_K)=0$ if and only if $aj_{\mathaccent 19 et}(Z)=0$.
\end{prop}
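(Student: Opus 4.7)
The forward direction is a formal consequence of functoriality. Both the cycle class map and the Hochschild--Serre spectral sequence \ref{HS} are functorial under the flat pullback $f:X_K\ra X$. Combined with the canonical identification $H^{2i-1}(X_{\overline{k}},\hat{\Z}(i))\cong H^{2i-1}(X_{\overline{K}},\hat{\Z}(i))$, which respects the modulo torsion convention, one obtains a restriction map $H^1(G_k,M)\ra H^1(G_K,M)$, where $M:=\tilde{H}^{2i-1}(X_{\overline{k}},\hat{\Z}(i))$, sending $aj_{\mathaccent 19 et}(Z)$ to $aj_{\mathaccent 19 et}(Z_K)$. Hence $aj_{\mathaccent 19 et}(Z)=0$ implies $aj_{\mathaccent 19 et}(Z_K)=0$.

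For the converse I would factor the extension through $k':=K\cap\overline{k}$, which is a finite extension of $k$, and observe that $K/k'$ is regular (that is, $k'$ is algebraically closed in $K$). The argument then splits into two steps: (i) for the regular extension $K/k'$, vanishing of $aj_{\mathaccent 19 et}(Z_K)$ forces that of $aj_{\mathaccent 19 et}(Z_{k'})$; and (ii) for the finite extension $k'/k$, vanishing of $aj_{\mathaccent 19 et}(Z_{k'})$ forces that of $aj_{\mathaccent 19 et}(Z)$.

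Step (i) I would treat geometrically. Choose a smooth, geometrically integral $k'$-variety $T$ with $k'(T)=K$, shrinking if necessary so that $T$ is normal. Grothendieck's homotopy exact sequence reads
\[1\ra\pi_1(T_{\overline{k'}})\ra\pi_1(T)\ra G_{k'}\ra 1,\]
and normality yields a surjection $G_K\twoheadrightarrow\pi_1(T)$. Since the continuous \'etale cohomology of the constant family $X\times T/T$ recovers $M$ at every geometric fibre, the $G_K$-action on $M$ factors through $G_{k'}$; the kernels of both quotient maps above therefore act trivially on $M$. Two applications of inflation--restriction (with $M^{\ker}=M$ in each) then give
\[H^1(G_{k'},M)\hookrightarrow H^1(\pi_1(T),M)\hookrightarrow H^1(G_K,M),\]
whose composition agrees with the restriction map along $k'\hookrightarrow K$. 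Injectivity of the restriction and the hypothesis $aj_{\mathaccent 19 et}(Z_K)=0$ then yield $aj_{\mathaccent 19 et}(Z_{k'})=0$.

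Step (ii) is the main obstacle. The natural tool is restriction--corestriction for $k'/k$, which gives $[k':k]\cdot aj_{\mathaccent 19 et}(Z)=0$ in $H^1(G_k,M)$, but $H^1(G_k,M)$ can carry torsion, and passing from annihilation by $[k':k]$ to genuine vanishing is nontrivial. This is exactly the point where the modulo torsion convention on $H^*(X)$ intervenes, as emphasized in the remark preceding the proposition: the result would fail for the torsion part of the cohomology. Following Nori's observation and Schoen's lemma 1.4 of \cite{Sch}, I expect the crux to be showing that the potentially obstructing $[k':k]$-torsion classes in $H^1(G_k,M)$ lying in the image of $aj_{\mathaccent 19 et}$ all originate from the torsion subgroup of $H^{2i-1}(X_{\overline{k}},\hat{\Z}(i))$ that our convention has quotiented out, so that under this convention the Abel--Jacobi class indeed vanishes; carefully carrying out this torsion bookkeeping is where the real work lies.
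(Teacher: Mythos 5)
Your forward direction and your step (i) are sound, and the decomposition of $K/k$ into a finite extension $k'/k$ followed by a regular extension $K/k'$ is a legitimate route (essentially the one in Schoen's lemma 1.4, which the paper cites); the paper itself compresses everything into a single inflation--restriction sequence, but for those parts the content is the same. The genuine gap is in step (ii), which you correctly single out as the crux but then leave open, and the sketch you offer for closing it points in the wrong direction. Restriction--corestriction only yields $[k':k]\cdot aj_{\mathaccent 19 et}(Z)=0$, and the obstruction to upgrading this to vanishing is not torsion ``originating from the torsion subgroup of $H^{2i-1}(X_{\overline{k}},\hat{\Z}(i))$'': that torsion has already been quotiented out by the paper's standing convention, and $H^1(G_k,M)$ can perfectly well carry torsion even when the coefficient module $M$ is torsion-free, so no bookkeeping of cohomological torsion classes will rescue the argument.

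The missing idea is a weight argument. Inflation--restriction for $1\ra G_{k'}\ra G_k\ra\mathrm{Gal}(k'/k)\ra 1$ (after replacing $k'$ by its Galois closure if necessary) gives the exact sequence
$$0\ra H^1(\mathrm{Gal}(k'/k),\, M^{G_{k'}})\ra H^1(G_k, M)\ra H^1(G_{k'}, M),$$
so it suffices to prove $M^{G_{k'}}=0$. This is where the Weil conjectures enter: $M$ is torsion-free and pure of weight $-1$, so after spreading out and specializing to a closed point of good reduction, a Frobenius element acts with eigenvalues of absolute value $q^{-1/2}\neq 1$ and therefore has no nonzero invariants in a torsion-free module. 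This is precisely the point at which the modulo-torsion convention is used (the eigenvalue argument says nothing about torsion classes, on which Frobenius invariants can survive, which is why the statement fails with torsion coefficients), and it kills the kernel of restriction outright, with no appeal to corestriction. This single vanishing statement is the key input of the paper's proof, and without it your step (ii) does not close.
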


\begin{proof}
We can assume that $K$ is Galois over $k$. We have an exact sequence of groups
$$1\ra G_K \ra G_k \ra \mathrm{Gal}(K/k) \ra 1,$$
hence the following exact sequence coming from the Hochschild-Serre spectral sequence
\begin{equation}\label{HSi}
0\ra H^1(\mathrm{Gal}(K/k), V^{G_K})\ra H^1(G_k, V) \ra H^1(G_K, V)^{\mathrm{Gal}(K/k)},
\end{equation}
with $V=H^{2i-1}(\overline{X}, \hat{\Z}(i))$. The definition of the \'etale Abel-Jacobi map from a Leray spectral sequence shows that $aj_{\mathaccent 19 et}(Z_K)$ is obtained from $aj_{\mathaccent 19 et}(Z)$ by the last map in (\ref{HSi}). On the other hand, it is a consequence of the Weil conjectures that $V^{G_K}$ is zero (recall $V$ is torsion-free), which implies that the last map in (\ref{HSi}) is an injection.
\end{proof}

\bigskip

The next result is due to Jannsen in \cite{JaMMK}, being inspired by results from Carlson and Beilinson we will recall later, and gives a geometric meaning to the \'etale Abel-Jacobi map. We recall it shortly, as it is crucial to the results of our paper.

Start with $X$ as before, and let $Z$ an algebraic cycle of pure codimension $i$ in $X$. Let $|Z|$ be the support of $Z$, and $U$ be the complement of $|Z|$ in $X$. By purity, we have an exact sequence of $G_k$-modules
$$0\ra H^{2i-1}(X_{\overline{k}}, \hat{\Z}(i))\ra H^{2i-1}(U_{\overline{k}}, \hat{\Z}(i)) \ra H^{0}(|Z|_{\overline{k}}, \hat{\Z})\ra 0$$
and the class of $Z$ gives a map $\hat{\Z}\ra H^{0}(|Z|_{\overline{k}}, \hat{\Z})$. The pull-back of the previous exact sequence by this map is an exact sequence of $G_k$-modules
\begin{equation}\label{exl}
0\ra H^{2i-1}(X_{\overline{k}}, \hat{\Z}(i))\ra H_{\mathaccent 19 et} \ra \hat{\Z}\ra 0.
\end{equation}
This extension gives a class $\alpha(Z)\in H^1(G_k, H^{2i-1}(X_{\overline{k}}, \hat{\Z}(i))$.

\begin{prop}
We have
$\alpha(Z)=aj_{\mathaccent 19 et}(Z)$.
\end{prop}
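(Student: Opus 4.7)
The plan is to realize both $\alpha(Z)$ and $aj_{\mathaccent 19 et}(Z)$ as the image of the cycle class $[Z]$ under one and the same connecting homomorphism, and then to conclude by naturality. Let $V = H^{2i-1}(X_{\overline{k}}, \hat{\Z}(i))$.

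First I would reinterpret $\alpha(Z)$ as the value of a boundary map. Writing $W$ for the image of $H^{2i-1}(U_{\overline{k}}, \hat{\Z}(i))$ in $H^{0}(|Z|_{\overline{k}}, \hat{\Z})$ -- equivalently, the kernel of the cycle class map $H^{0}(|Z|_{\overline{k}}, \hat{\Z}) \to H^{2i}(X_{\overline{k}}, \hat{\Z}(i))$ -- the purity sequence takes the form of a short exact sequence of $G_k$-modules
\begin{equation*}
0 \to V \to H^{2i-1}(U_{\overline{k}}, \hat{\Z}(i)) \to W \to 0,
\end{equation*}
and since $Z$ is homologically equivalent to zero, the fundamental class $[Z] \in H^{0}(|Z|_{\overline{k}}, \hat{\Z})^{G_k}$ lands in $W^{G_k}$. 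The extension (\ref{exl}) is precisely the pullback of this sequence along $\hat{\Z} \to W$, $1 \mapsto [Z]$, so by the standard identification of Yoneda classes with connecting homomorphisms, $\alpha(Z) = \partial([Z])$ for the boundary $\partial : W^{G_k} \to H^{1}(G_k, V)$.

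Next I would verify that $aj_{\mathaccent 19 et}(Z)$ is computed by the same $\partial$. Purity for the pair $(X, |Z|)$ gives $H^{q}_{|Z|_{\overline{k}}}(X_{\overline{k}}, \hat{\Z}(i)) = 0$ for $q<2i$, so the Hochschild-Serre spectral sequence for continuous étale cohomology with supports collapses in the relevant range and furnishes a $G_k$-equivariant identification $H^{2i}_{|Z|}(X, \hat{\Z}(i)) \cong H^{0}(|Z|_{\overline{k}}, \hat{\Z})^{G_k}$, under which the local cycle class $[Z]_{\mathrm{loc}}$ corresponds to $[Z]$. The localization long exact sequence
\begin{equation*}
\cdots \to H^{2i-1}(U, \hat{\Z}(i)) \xrightarrow{\delta} H^{2i}_{|Z|}(X, \hat{\Z}(i)) \to H^{2i}(X, \hat{\Z}(i)) \to \cdots
\end{equation*}
is a morphism of Hochschild-Serre-filtered long exact sequences, and $cl^X(Z)$ is the image of $[Z]_{\mathrm{loc}}$ under the last arrow. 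A diagram chase in the commutative square of Hochschild-Serre spectral sequences for $U$, $X$, and the pair $(X, |Z|)$ then shows that the graded piece of $cl^X(Z)$ in $E_\infty^{1, 2i-1} \subseteq H^{1}(G_k, V)$, which by definition is $aj_{\mathaccent 19 et}(Z)$, is obtained by lifting $[Z]$ through the surjection $H^{2i-1}(U_{\overline{k}}, \hat{\Z}(i)) \to W$ and forming its Galois coboundary -- that is, exactly $\partial([Z])$.

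The main obstacle lies in the diagram chase of the second step: one must carefully align the edge homomorphisms and $d_2$-differentials of the three Hochschild-Serre spectral sequences with the purity identification. I would handle this rigorously by passing to the canonical truncation $\tau_{\leq 2i-1} R\Gamma(X_{\overline{k}}, \hat{\Z}(i))$, whose associated distinguished triangle in the derived category of $G_k$-modules recovers both the extension (\ref{exl}), via its connecting triangle with the cone carrying the class of $Z$, and the $d_2$-differential of the Hochschild-Serre spectral sequence, via the standard computation of $d_2$ in terms of truncation triangles. The desired equality $\alpha(Z) = aj_{\mathaccent 19 et}(Z)$ then follows by functoriality.
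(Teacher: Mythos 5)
The paper does not actually prove this proposition: it is quoted from Jannsen \cite{JaMMK}, so there is no internal argument to compare yours against. That said, your strategy is the standard one and is sound: identifying $\alpha(Z)$ with $\partial([Z])$ for the connecting map $\partial\colon W^{G_k}\to H^1(G_k,V)$ of the purity sequence is the usual dictionary between Yoneda classes of pullback extensions and coboundaries (note that your introduction of $W$ as the kernel of the cycle map is actually more careful than the paper's displayed sequence, which asserts surjectivity onto all of $H^0(|Z|_{\overline{k}},\hat{\Z})$); and reducing $aj_{\mathaccent 19 et}(Z)$ to the same $\partial([Z])$ via the localization triangle and the compatibility of Hochschild--Serre filtrations is exactly how the comparison is done. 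Three points deserve explicit attention if you were to write this out. First, purity for $|Z|$ requires care when the support is singular: semi-purity ($H^q_{|Z|_{\overline{k}}}=0$ for $q<2i$) holds for any closed subset of codimension $\geq i$, but the identification $H^{2i}_{|Z|_{\overline{k}}}(X_{\overline{k}},\hat{\Z}(i))\cong H^0(|Z|_{\overline{k}},\hat{\Z})$ is obtained by discarding the singular locus, which has codimension $>i$ and so does not affect these degrees. Second, in Jannsen's continuous cohomology the identification $\mathrm{Ext}^1_{G_k}(\hat{\Z},V)\cong H^1(G_k,V)$ used in your first step is itself a (standard but nontrivial) feature of his formalism. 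Third, as you acknowledge, the entire content of the proposition sits in the final diagram chase; your proposal to run it through the truncation triangle $\tau_{\leq 2i-1}R\Gamma(X_{\overline{k}},\hat{\Z}(i))\to R\Gamma(X_{\overline{k}},\hat{\Z}(i))$, which simultaneously computes $F^1H^{2i}(X,\hat{\Z}(i))$ and its projection to $E_\infty^{1,2i-1}\subseteq H^1(G_k,V)$, is the right tool and does close the argument (up to the usual sign conventions, which are immaterial for the vanishing statements the paper needs). So the proposal is a correct outline of the proof the paper delegates to the literature.
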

Let us note that this proposition immediately carries out to the variational setting for flat families of algebraic cycles. In this case, we get an extension of locally constant $\hat{\Z}$-sheaves, that is, a product of locally constant $\Z_l$-sheaves for all $l$, over the base scheme which on every fiber is canonically isomorphic to the extension (\ref{exl}).

\subsection{Hodge theory}

The Hodge-theoretic picture is different. Indeed, the category of mixed Hodge structures has no higher extension groups as shown by Beilinson, so we cannot expect to construct directly a similar filtration on Chow groups through this means. The use of Leray spectral sequences in this setting has been studied by Nori, Saito, Green-Griffiths and others, and can be considered well-understood. Even though we cannot expect to construct a Bloch-Beilinson filtration on Chow groups using Hodge theory, at least in a naive way, we can construct a two-term filtration using Deligne cohomology. We use this approach to make the similarity with the previous discussion more obvious, but in this paper we simply use Griffiths' Abel-Jacobi map, which was defined in \cite{Grif}. Griffiths' work on normal functions is of course fundamental to our results.

\bigskip

Let us assume for this paragraph that the base field is $\C$.
Recall that we have Deligne cohomology groups $H^i_{\mathcal{D}}(X, \Z(j))$. Those are the ``absolute'' version of Hodge cohomology groups in the same way that continuous \'etale cohomology is the absolute version of \'etale cohomology over an algebraic closure of the base field. There is an exact sequence
$$0\ra J^i(X) \ra  H^{2i}_{\mathcal{D}}(X, \Z(i))\ra H^{2i}(X, \Z)(i)\ra 0$$
as well as a cycle map
$$cl^X : CH^i(X)\ra H^{2i}_{\mathcal{D}}(X, \Z(i)).$$
Those are compatible with the usual cycle map to $H^{2i}(X, \Z)(i)$.

The cohomology group $H^{2i}(X, \Z)(i)$ is, up to a Tate twist the usual singular cohomology of the complex manifold $X$ with its canonical Hodge structure, and $J^i(X)$ is Griffiths' $i$-th intermediate Jacobian, which is defined the following way.

Integration of differential forms gives a map from the singular homology group $H_{2n-2i+1}(X, \Z)$ to $F^{n-i+1}H^{2n-2i+1}(X,\C)^*$, $n$ being the dimension of $X$ and $F$ the Hodge filtration. The quotient group
$$F^{n-i+1}H^{2n-2i+1}(X,\C)^*/H_{2n-2i+1}(X, \Z)$$
is a complex torus, canonically isomorphic to
$$J^i(X):=\frac{H^{2i-1}(X, \C)}{F^iH^{2i-1}(X, \C) \oplus H^{2i-1}(X, \Z)}.$$
There is a canonical isomorphism of abelian groups between $J^i(X)$ and the extension group Ext$^1_{MHS}(\Z, H^{2i-1}(X, \Z)(i))$ in the category of mixed Hodge structures, as noted by Carlson in \cite{Carl}. One can also refer to \cite{Voib}, p.463.

\begin{df}
The map
$$aj : CH^i(X)_{hom}\ra J^i(X)$$
induced from the previous exact sequence is called the (transcendental) Abel-Jacobi map, or Griffiths' Abel-Jacobi map.
\end{df}

\bigskip

In the light of the isomorphism above, Beilinson has shown in \cite{BelH} (see also \cite{Carl} for the case of divisors on curves) the following way of computing the Abel-Jacobi map, which is very similar to its \'etale counterpart-- and has been proved earlier. Let $Z$ an algebraic cycle of pure codimension $i$ in $X$. Let $|Z|$ be the support of $Z$, and $U$ be the complement of $|Z|$ in $X$. We have an exact sequence of mixed Hodge structures
$$0\ra H^{2i-1}(X, \Z(i))\ra H^{2i-1}(U, \Z(i)) \ra H^{0}(|Z|, \Z)\ra 0$$
and the class of $Z$ gives a map $\Z\ra H^{0}(|Z|, \Z)$. The pull-back of the previous exact sequence by this map is an exact sequence of mixed Hodge structures
\begin{equation}\label{exh}
0\ra H^{2i-1}(X, \Z(i))\ra H \ra \Z\ra 0.
\end{equation}
This extension gives a class $\alpha(Z)\in \mathrm{Ext}^1_{MHS}(\Z, H^{2i-1}(X, \Z)(i))=J^i(X)$.

\begin{prop}
We have
$\alpha(Z)=aj(Z)$.
\end{prop}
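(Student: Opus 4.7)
The plan is to use Carlson's explicit description of the isomorphism $\mathrm{Ext}^1_{MHS}(\Z, A) \cong J(A)$ and to show that, through this isomorphism, the extension class $\alpha(Z)$ is computed by the same integration pairing as Griffiths' class $aj(Z)$.

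First, I recall Carlson's formula for an extension $0 \ra A \ra H \ra \Z \ra 0$ with $A$ pure of weight $-1$: the class in $J(A) = A_\C/(F^0 A_\C + A_\Z)$ is obtained by choosing any integer lift $v_\Z \in H_\Z$ of $1 \in \Z$ and any Hodge-filtered lift $v_F \in F^0 H_\C$, and taking their difference $v_\Z - v_F \in A_\C$, which is well-defined modulo $F^0 A_\C + A_\Z$. Applying this to the extension (\ref{exh}) with $A = H^{2i-1}(X, \Z(i))$ (which indeed has weight $-1$), I only need to exhibit sensible choices of $v_\Z$ and $v_F$ for the pulled-back extension and pair the resulting difference against $F^{n-i+1}H^{2n-2i+1}(X,\C)$ via Poincar\'e duality.

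Second, I produce explicit lifts. Since $Z$ is homologically trivial on $X$, there exists a real $(2n-2i+1)$-chain $\Gamma$ on $X(\C)$ with $\partial \Gamma = Z$. Via Poincar\'e-Lefschetz duality on $U = X\setminus |Z|$, the class of $\Gamma$ in the relative homology group $H_{2n-2i+1}(X, |Z|; \Z)$ corresponds to an integer class in $H^{2i-1}(U, \Z(i))$ mapping to $[Z]$ in $H^0(|Z|, \Z)$, and this furnishes $v_\Z$ in the pulled-back extension $H$. For the Hodge-filtered lift, I use that $[Z]$ is of pure type $(0,0)$ inside $H^0(|Z|,\C)$, so by the strictness of the Hodge filtration on the open $U$ (an input from Deligne's theory) it admits a lift $v_F \in F^i H^{2i-1}(U, \C) = F^0 H^{2i-1}(U, \Z(i))_\C$. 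Both lifts descend to the pullback $H$, and Carlson's formula gives $\alpha(Z) = v_\Z - v_F \in J^i(X)$.

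Third, I identify $v_\Z - v_F$ with $aj(Z)$ under the duality isomorphism $J^i(X) \cong F^{n-i+1}H^{2n-2i+1}(X,\C)^*/H_{2n-2i+1}(X, \Z)$. For any closed form $\omega \in F^{n-i+1}H^{2n-2i+1}(X,\C)$, the pairing against $v_F$ vanishes for bidegree reasons: $F^i \cdot F^{n-i+1} \subset F^{n+1} = 0$. On the other hand, the pairing against $v_\Z$ reduces, by the definition of the duality via the chain $\Gamma$ and Stokes' theorem, to $\int_\Gamma \omega$, which is precisely Griffiths' formula for $aj(Z)$. Equating the two gives $\alpha(Z) = aj(Z)$.

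The main obstacle I expect is the careful bookkeeping of Tate twists, signs, and factors of $(2\pi i)^i$, together with the verification that the Hodge filtration on $H^{2i-1}(U)$ is strictly compatible with the Gysin morphism so that the lift $v_F$ in $F^i$ genuinely exists --- this is the nontrivial mixed-Hodge-theoretic ingredient. Once these compatibilities are checked, the identification with Griffiths' integral is a direct pairing computation.
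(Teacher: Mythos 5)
The paper does not actually prove this proposition: it is quoted from Beilinson \cite{BelH} (and from Carlson \cite{Carl} in the case of divisors on curves), so there is no internal argument to compare yours against. Your sketch is, however, essentially the standard proof of the cited result, and its skeleton is correct: Carlson's formula $v_{\Z}-v_F$ for the extension class, the integral lift obtained by Lefschetz duality from a topological chain $\Gamma$ with $\partial\Gamma=Z$, the lift $v_F\in F^iH^{2i-1}(U,\C)$ obtained from strictness of the morphism of mixed Hodge structures $H^{2i-1}(U,\Q(i))\ra H^0(|Z|,\Q)$ (its image contains $[Z]$ because $[Z]$ dies in $H^{2i}(X)$), and the final pairing against $F^{n-i+1}H^{2n-2i+1}(X,\C)$. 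Two points need to be made honest rather than dismissed as bookkeeping. First, $v_F$ lives only on the open variety $U$, so ``the pairing against $v_F$ vanishes'' is not literally meaningful as stated: one must represent $v_F$ by a form with logarithmic poles (or an $L^1_{\mathrm{loc}}$ current) $\psi$ on $X$ and $v_{\Z}$ by the integration current $T_\Gamma$, check that $T_\Gamma-\psi$ is a closed current whose class in $H^{2i-1}(X,\C)$ is $v_{\Z}-v_F$, and only then observe that $\int_X\psi\wedge\omega=0$ because the integrand has holomorphic degree at least $n+1$, while $\int_X T_\Gamma\wedge\omega=\int_\Gamma\omega$ is Griffiths' integral. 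Second, Carlson's recipe requires $v_F\in F^0H_{\C}\cap W_0H_{\C}$; this is automatic here because the pulled-back extension $H$ satisfies $W_0H=H$ (sub of weight $-1$, quotient of weight $0$), but it is exactly the reason one pulls back along $\Z\cdot[Z]\subset H^0(|Z|,\Z)$ rather than working with all of $H^{2i-1}(U)$, whose weights go up to $2i-2$. With these completions (and the paper's standing convention that cohomology is taken modulo torsion, which makes the purity sequence exact on the left), your argument is the one in \cite{BelH} and \cite{Voib}.
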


The vanishing of the Abel-Jacobi map has a simple interpretation in those terms. Indeed, recall that if $H$ is a mixed Hodge structure (of weight $0$) with weight filtration $W_{\bullet}$ and Hodge filtration $F^{\bullet}$, a Hodge class of weight $k$ in $H$ is an element of $W_{2k}H\cap F^k H_{\C}$. In this terms, it is straightforward to see that the extension $(\ref{exh})$ splits if and only if there exists a Hodge class (which has to be of weight $0$) in $H$ mapping to $1$ in $\Z$.

Again, in case of a flat family of algebraic cycles which are homologous to zero in the fibers, we get an extension of variations of mixed Hodge structures corresponding point by point to (\ref{exh}). It satisfies Griffiths' transversality, see \cite{Sainf}, lemma 1.3. We also get the Jacobian fibration $J^i(X/S)$, and a section $\nu_Z$ of it is obtained by applying the relative Abel-Jacobi map. The preceding remark shows that the zero locus of $\nu_Z$ is a Hodge locus for the variation of mixed Hodge structures above.

\begin{df}
 The section of $J^i(X/S)$ attached to the cycle $Z$ is the normal function $\nu_Z$ attached to $Z$.
\end{df}

Normal functions have been extensively studied, see \cite{Grif}, \cite{Sainf}, etc. See also \cite{Voib}, ch. 19. It is a fundamental fact that normal functions are holomorphic. In particular, their zero locus is analytic. In this paper, our results will take into account this analytic structure, our proofs being valid even without considering the reduced structure on the zero locus of a normal function.

\bigskip

It will be very important to us, though straightforward, that if we start with a family over a finitely generated base field, the extension of local systems coming from the \'etale Abel-Jacobi map and from the transcendental one, after base change to $\C$, are compatible in the obvious way, as Artin's comparison theorem between \'etale and singular cohomology readily shows.

\subsection{A first comparison result : Hodge classes of normal functions and their \'etale counterpart}

Let $\mathcal{H}$ be a variation of pure Hodge structures of weight $-1$ over an irreducible complex variety $S$, and let $\nu$ be a normal function on $S$. The Hodge class of $\nu$ is defined the following way. Let $H_{\Z}$ be the integral structure of $\mathcal{H}$. We have an exact sequence of sheaves on $S$
$$0\ra H_{\Z}\ra \mathcal{H}/F^0\mathcal{H}\ra \mathcal{J}(\mathcal{H})\ra 0, $$
$\mathcal{J}(\mathcal{H})$ being the sheaf of holomorphic sections of the Jacobian fibration. This gives a map $H^0(S, J(\mathcal{H}))\ra H^1(S, H_{\Z})$. The normal function $\nu$ is a holomorphic section of $\mathcal{J}(\mathcal{H})$. Its image in $H^1(S, H_{\Z})$ is called its Hodge class and is denoted by $[\nu]$.

The homological interpretation of intermediate Jacobians suggests another way of defining Hodge classes of normal functions. Indeed, a normal function $\nu$ defines an extension of variations of mixed Hodge structures over $S$
$$0\ra H\ra H' \ra \Z \ra 0.$$

The long exact sequence of sheaf cohomology gives a map $\delta : H^0(S, \Z)\ra H^1(S, \mathcal{H})$. The following is straightforward, but we have been unable to find a reference.

\begin{prop}
We have $[\nu]=\delta(1)$.
\end{prop}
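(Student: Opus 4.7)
The plan is to prove this by an explicit Čech-cocycle computation, using Carlson's formula (recalled earlier in the excerpt) for the extension class associated to a mixed Hodge structure of weights $-1, 0$. Both classes $[\nu]$ and $\delta(1)$ are images of sections under connecting homomorphisms of short exact sequences of sheaves, so I would compute representatives of each as Čech 1-cocycles for a common open cover and show that, with compatible choices, the cocycles are literally equal.

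Concretely, I would first choose an open cover $\{U_i\}$ of $S$ fine enough that on each $U_i$ one can pick simultaneously a holomorphic section $v_F^i \in F^0 H'(U_i)$ lifting $1 \in \underline{\Z}(U_i)$, and an integral section $v_\Z^i \in H'_\Z(U_i)$ lifting $1$. The integral lift exists because $H'_\Z \to \underline{\Z}$ is surjective as a local system; the holomorphic $F^0$-lift exists because, pointwise, the projection $F^0 H'_s \to \Z$ is surjective (since $\Z$ is pure of weight $0$ while $F^0 H_s = 0$), so $F^0\mathcal{H}' \to \mathcal{O}_S$ is a surjection of coherent sheaves and the integer section $1$ lifts locally.

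With these choices in hand, I set $\tilde\nu_i := v_\Z^i - v_F^i$. Since both $v_\Z^i$ and $v_F^i$ map to $1$ in $\underline{\Z}$, their difference is a section of $H$ on $U_i$, and by Carlson's formula its image in $(\mathcal{H}/F^0)(U_i)$ is precisely $\nu|_{U_i}$—that is, $\tilde\nu_i$ is a local lift of $\nu$ through the surjection $\mathcal{H}/F^0\mathcal{H} \twoheadrightarrow \mathcal{J}(\mathcal{H})$. By the standard description of the connecting homomorphism, $[\nu]$ is then represented by the Čech cocycle $(\tilde\nu_i - \tilde\nu_j) \in H_\Z(U_{ij})$, where we use that $H_\Z \hookrightarrow \mathcal{H}/F^0$ is injective (again because $H$ is pure of weight $-1$, so $F^0 H \cap H_\Z = 0$ fiberwise). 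Expanding,
$$\tilde\nu_i - \tilde\nu_j \;=\; (v_\Z^i - v_\Z^j) \;-\; (v_F^i - v_F^j),$$
and since $v_F^i - v_F^j$ lies in $F^0 H$ (both lift the same integer $1$), it vanishes modulo $F^0$; so the cocycle reduces to $v_\Z^i - v_\Z^j$, viewed as an element of $H_\Z$. But this is exactly the Čech cocycle computing $\delta(1)$ from the extension $0 \to H_\Z \to H'_\Z \to \underline{\Z} \to 0$: local integral lifts of $1$ on the nose are the $v_\Z^i$, and their difference on overlaps is the cocycle representative of $\delta(1)$. This yields $[\nu] = \delta(1)$.

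The only delicate point is the simultaneous existence of $v_F^i$ and $v_\Z^i$ on a common refinement, together with the injectivity $H_\Z \hookrightarrow \mathcal{H}/F^0$; both reduce to weight considerations for a MHS whose weight-graded pieces are $H$ (pure of weight $-1$) and $\Z$ (pure of weight $0$). Alternatively, one could phrase the argument functorially: interpret $\nu$ as a sheaf morphism $\underline{\Z} \to \mathcal{J}(\mathcal{H})$, pull back the defining extension of $\mathcal{J}(\mathcal{H})$ along this morphism to obtain an extension of $\underline{\Z}$ by $H_\Z$, apply naturality of the connecting map to get $[\nu] = \delta'(1)$ for the pullback, and identify the pullback with $H'$ via Carlson's isomorphism at each fiber—but the Čech computation above essentially carries out this identification by hand.
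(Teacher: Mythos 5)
Your proof is correct and is essentially the paper's own argument: both compute $[\nu]$ and $\delta(1)$ as \v{C}ech $1$-cocycles for a common open cover, using Carlson's explicit description of the extension class via an integral lift and an $F^0$-lift of $1$, and conclude because the difference of the $F^0$-lifts dies modulo $F^0$. One parenthetical justification is off, though it does not affect the argument: for $H$ pure of weight $-1$ one has $F^0H_{s,\C}\neq 0$ in general (only $F^0H_{s,\C}\cap H_{s,\Z}=0$), so the local surjectivity of $F^0\mathcal{H}'\to\mathcal{O}_S$ should instead be deduced from strictness of the Hodge filtration for the morphism of mixed Hodge structures $H'\to\Z$.
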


\begin{proof}
Let us start by briefly recalling the explicit form given in \cite{Carl} of the isomorphism $\phi : \mathrm{Ext}^1_{MHS}(\Z, H)\simeq J(H)$ when $S$ is a point. Choose an isomorphism of abelian groups
$$H'_{\Z}\simeq H_{\Z}\oplus \Z.$$
There exists an element $\alpha\in H_{\C}$ such that $\alpha\oplus 1\in F^0 H'_{\C}$. The class of $\alpha$ in $\frac{H_{\C}}{H_{\Z}\oplus F^0H_{\C}}=J(H)$ is well-defined and is the image of the extension
$$0\ra H \ra H' \ra \Z \ra 0$$
by $\phi$.

\smallbreak

Now let us work over a general complex quasi-projective base $S$ as before. Let us choose a covering of $S(\C)$ by open subsets $U_i$ (for the usual topology) such that the exact sequence
$$0\ra H_{\Z} \ra H'_{\Z} \ra \Z \ra 0$$
splits over $U_i$. Splittings correspond to sections $\sigma_i\in H^0(U_i, H'_{\Z})$ mapping to $1$ in $\Z$. The cohomology class $\delta(1)$ is represented by the cocycle $\sigma_i-\sigma_j$.

For each $i$ and each $s\in U_i$, let $\tau_i(s)$ be the image in $H_{s,\C}/F^0H_{s,\C}$ of an element $\alpha_s\in H_{s,\C}$ such that $\sigma_i(s)+\alpha_s\in F^0 H'_{s,\C}$. The Hodge class of the normal function $\nu$ is represented by the cocycle $\tau_i-\tau_j\in H_{\Z}/(H_{\Z}\cap F^0H_{\Z})=H_{\Z}$. Since $\tau_i-\tau_j=\sigma_i-\sigma_j$ through this identification, this concludes the proof.
\end{proof}

\bigskip

Let $S$ be a quasi-projective variety over a finitely generated subfield $k$ of $\C$, with an extension $\nu_{\mathaccent 19 et}$

$$0\ra \hat{H} \ra \hat{H'} \ra \hat{\Z}\ra 0$$
of $\hat{\Z}$-sheaves over $S$. We get an extension class in $H^1(S_{\C}, \hat{H})$ by pulling back to $S_{\C}$, which we will denote by $[\nu_{\mathaccent 19 et}]$.

\bigskip

In case we start with a smooth projective family $\pi : X\ra S$, together with a flat family of algebraic cycles $Z\hookrightarrow X$ of algebraic cycles of codimension $i$ which are homologically equivalent to zero on the fibers of $\pi$, we get an extension of variations of mixed Hodge structures over $S_{\C}$ corresponding to $\nu_Z$, and an extension $\nu_{\mathaccent 19 et}$ of $\hat{\Z}$-sheaves over $S$ induced by the \'etale Abel-Jacobi map. The pull-back of the latter to $S_{\C}$ is the extension of local systems induced by $\nu$. As a consequence, using Artin's comparison theorem between \'etale and singular cohomology in \cite{SGA4}, exp. XI, we get the following ``easy'' part of the comparison theorems between Abel-Jacobi maps.

\begin{prop}\label{comph}
The class $[\nu_{\mathaccent 19 et}]$ is the image in $H^1(S_{\C}, R^{2i-1}\pi_*\hat{\Z}(i))$ of the Hodge class $[\nu]\in H^1(S_{\C}, R^{2i-1}\pi_*\Z(i))$. As a consequence, $[\nu]=0$ if and only if $[\nu_{\mathaccent 19 et}]=0$.
\end{prop}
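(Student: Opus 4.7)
The plan is to mimic the argument of the previous proposition in the étale setting and then glue the two pictures together using Artin's comparison theorem. The key observation is that both Hodge and étale classes have been rewritten as connecting homomorphisms $\delta(1)$ and $\hat\delta(1)$ attached to a short exact sequence of sheaves on $S_{\C}$, so a morphism of extensions will automatically give the desired equality.

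First, I would make explicit the étale analog of the previous proposition. The étale Abel-Jacobi class $[\nu_{\mathaccent 19 et}] \in H^1(S_{\C}, \hat H)$ is, by definition, the pullback to $S_{\C}$ of the class obtained from the extension
$$0 \ra \hat H \ra \hat H' \ra \hat\Z \ra 0$$
of $\hat\Z$-sheaves on $S$. Exactly the same sheaf-theoretic argument as in the proof of the previous proposition (choose locally a set-theoretic splitting $\hat\sigma_i \in H^0(U_i, \hat H')$ mapping to $1$, and take $\hat\sigma_i - \hat\sigma_j$) shows that $[\nu_{\mathaccent 19 et}] = \hat\delta(1)$, where $\hat\delta : H^0(S_{\C}, \hat\Z) \ra H^1(S_{\C}, \hat H)$ is the connecting homomorphism.

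Next, I would invoke the compatibility of the two extensions already flagged in the paper just before the statement: by Artin's comparison theorem between \'etale and singular cohomology (\cite{SGA4}, exp.\ XI), the extension of local systems on $S_{\C}$ induced by $\nu_Z$ and the pullback to $S_{\C}$ of $\nu_{\mathaccent 19 et}$ are compatible in the natural sense. Concretely, tensoring the Hodge-theoretic extension with $\hat\Z$ (using the inclusion $\Z \hookrightarrow \hat\Z$) yields a morphism of short exact sequences of sheaves on $S_{\C}$
$$\xymatrix{ 0 \ar[r] & H \ar[r] \ar[d] & H' \ar[r] \ar[d] & \Z \ar[r] \ar[d] & 0 \\ 0 \ar[r] & \hat H \ar[r] & \hat H' \ar[r] & \hat \Z \ar[r] & 0 }$$
which induces a commutative square between the connecting maps $\delta$ and $\hat\delta$. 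Tracking $1 \in H^0(S_\C, \Z)$ through this square gives $[\nu_{\mathaccent 19 et}] = \hat\delta(1)$ equal to the image of $\delta(1) = [\nu]$ under $H^1(S_{\C}, R^{2i-1}\pi_*\Z(i)) \ra H^1(S_{\C}, R^{2i-1}\pi_*\hat\Z(i))$, which is the first assertion.

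The ``as a consequence'' claim is the only step requiring a small separate argument; one direction is trivial, and for the other I need injectivity of this last coefficient change map. Recall the standing convention that all cohomology is taken modulo torsion, so $L := R^{2i-1}\pi_*\Z(i)$ is a torsion-free local system on the complex variety $S_{\C}$ with finitely generated stalks. Since $\hat\Z$ is torsion-free, hence flat over $\Z$, and $S_{\C}$ is homotopy equivalent to a finite CW-complex so that $H^1(S_{\C},L)$ is finitely generated, the universal coefficient theorem gives $H^1(S_{\C}, L \otimes \hat\Z) \simeq H^1(S_{\C}, L) \otimes \hat\Z$, and a finitely generated torsion-free abelian group embeds into its tensor product with $\hat\Z$. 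This is the only real technical point of the proof; everything else is formal functoriality of the connecting homomorphism.
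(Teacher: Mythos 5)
Your argument is correct and is essentially the paper's own: the paper gives no separate proof, deducing the proposition directly from the compatibility (via Artin's comparison theorem) of the \'etale extension pulled back to $S_{\C}$ with the Hodge-theoretic extension tensored by $\hat{\Z}$, which is exactly the morphism of short exact sequences and the functoriality of $\delta$ that you write down. One small repair to your last step: torsion-freeness of $H^1(S_{\C}, L)$ does not follow from torsion-freeness of the stalks of $L$, but you do not need it, since $\hat{\Z}/\Z$ is torsion-free, hence flat over $\Z$, so $A\ra A\otimes\hat{\Z}$ is injective for \emph{every} abelian group $A$.
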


\paragraph{Remark 1.}
There are of course different ways of computing the value of $[\nu_{\mathaccent 19 et}]$. Indeed, Leray spectral sequences still exist in continuous \'etale cohomology, working in the category of $l$-adic sheaves as defined by Ekedahl in \cite{Ek}. The cycle class of $Z$ induces from the Leray spectral sequence attached to the morphism $\pi$ an element in $H^1_{\mathaccent 19 et}(S, R^{2i-1}\pi_*\hat{\Z}(i))$. This cohomology class maps to an element in $H^1_{\mathaccent 19 et}(S_{\C}, R^{2i-1}\pi_*\hat{\Z}(i))=H^1(S_{\C}, R^{2i-1}\pi_*\hat{\Z}(i))$. Now this class is equal to $[\nu]$. This can either be proved directly or using proposition \ref{comph} and applying the corresponding well-known result for Griffith's Abel-Jacobi map (see \cite{Voib}, lemma 20.20).

\bigskip
\paragraph{Remark 2.}
Proposition \ref{comph} implies the fact that for cycles \emph{algebraically equivalent to zero}, the vanishing of Griffith's Abel-Jacobi invariant is equivalent to the vanishing of the \'etale Abel-Jacobi invariant. This result is well-known, and Raskind gives a proof for zero-cycles in \cite{Rask}, but it does not appear to have been stated explicitly in the literature. We can easily reduce it to the case of divisors on curves, which is treated by Raskind, by the following functoriality argument.

We work over a finitely generated subfield $k$ of $\C$. Let $Z\hookrightarrow X$ be a flat family of cycles of codimension $i$ over a smooth curve $C$ such that the fiber of $Z$ over a geometric point $0$ of $C$ is zero. Changing base to $\C$, the normal function $\nu_Z : C\ra J^i(X)$ takes value in the algebraic part $J^i_{alg}(X)$ of the intermediate Jacobian of $X$. Doing Kummer theory on $J^i_{alg}(X)$, we get a class in $H^1(S, H^{2i-1}(X, \hat{\Z}(i)))$ \footnote{Actually, we would need to change the base field to a field of definition of $J^i_{alg}(X)$ to do so, which we are allowed to do by proposition \ref{inv}, but $J^i_{alg}(X)$ is actually defined over $k$.}. This corresponds to an extension $\nu'_{\mathaccent 19 et}$ of sheaves on $S$ which we claim is $\nu_{\mathaccent 19 et}$, coming from the \'etale Abel-Jacobi map applied to the $Z_s$. The comparison result we need comes from this claim and the Mordell-Weil theorem.

Now since the extension $\nu'_{\mathaccent 19 et}$ obviously splits at the point $0$, we just have to prove that $[\nu'_{\mathaccent 19 et}]=[\nu_{\mathaccent 19 et}]$. An easy functoriality argument reduces this to the case when $X$ is the curve $C$ itself, which concludes. We could also have used functoriality for $\nu_{\mathaccent 19 et}$ itself, but this is a little more cumbersome and is not necessary.

\section{Proof of the theorems}

\subsection{Zero loci for large monodromy groups}

This section is devoted to showing how assuming the family $X\ra S$ has a large monodromy can help study the vanishing locus of the Abel-Jacobi map and deduce theorem \ref{gros} and theorem \ref{df}. This kind of argument is very much inspired by \cite{Voabs}, where it appears in the pure case as a criterion for Hodge classes to be absolute.

The main idea is the following : start with an extension $0\ra H' \ra H \ra \Z \ra 0$ of variations of mixed Hodge structures on a quasi-projective variety $S$. If the monodromy representation on $H'$ has no nontrivial invariant part, then any global section of $H$ is in $F^0 H$, the filtration $F$ being the Hodge filtration. This remark allows us to reduce the question of the splitting of the previous exact sequence to a geometric question, and allows for comparison theorems.

In the setting of normal functions, this is equivalent to the following, which has been observed a long time ago. Under this assumption, the normal function with value in the $i$-th intermediate Jacobian is determined by its Hodge class, see \cite{Grif2}. This has been used for instance by Manin in the proof of Mordell's conjecture over function fields in \cite{Man}. Our argument does not proceed along these lines for convenience, but part of it could easily be translated using Griffiths' results and the Leray spectral sequence.

\bigskip

Recall the notations of the introduction. We have a smooth projective family over a quasi-projective base $\pi : X\ra S$, together with a flat family of algebraic cycles $Z\ra X$ of pure codimension $i$. Everything is defined over a finitely generated field $k$ of characteristic zero. As far as our results are concerned, and taking into account proposition \ref{inv}, standard spreading techniques allow us to assume without loss of generality that $k$ is a number field. Suppose that for any geometric point $s$ of $S$, $Z_s$ is homologically equivalent to zero in $X_s$.  Fix an embedding of $k$ in $\C$. We get the normal function $\nu_Z$ on $S(\C)$, which is a holomorphic section of the bundle of intermediate Jacobians over $S(\C)$.

We have the following exact sequence of local systems of $\hat{\Z}$-sheaves on $S$, canonically attached to the family $Z$ of algebraic cycles
\begin{equation}\label{varl}
0\ra R^{2i-1}\pi_* \hat{\Z}(i) \ra H_{\mathaccent 19 et} \ra \hat{\Z} \ra 0.
\end{equation}
Since $\hat{\Z}$ is flat over $\Z$, the pull-back to $S_{\C}$ of this sequence of sheaves is the tensor product by $\hat{\Z}$ of the exact sequence
\begin{equation}\label{varh}
0\ra R^{2i-1}(\pi_{\C})_* \Z(i) \ra H \ra \Z \ra 0
\end{equation}
of local systems used to compute Griffiths' Abel-Jacobi map. Those local systems are underlying to variations of mixed Hodge structures. Saying that $\nu_Z$ vanishes on $S_{\C}$ is equivalent to saying that $S$ is equal to the locus of Hodge classes of $H$ which map to $1$ in $\Z$.

We will deduce our theorems from the following result.

\begin{thm}
In the above setting, assume that the locally constant sheaf $R^{2i-1}(\pi_{\C})_* \C$ has no nonzero global section over $S_{\C}$. Then the following are equivalent :
\begin{itemize}
\item[(i)] The normal function $\nu$ associated to $Z_{\C}$ vanishes on $S_{\C}$.
\item[(ii)] For every closed point $s$ of $S$ with value in a finitely generated field $K$, the image of $Z_s$ by the \'etale Abel-Jacobi map vanishes in the group $H^1(G_K, H^{2i-1}(\overline{X_s}, \widehat{\Z}(i)))$.
\item[(iii)] For any automorphism $\sigma$ of $\C$, the normal function $\nu^{\sigma}$ associated to $Z^{\sigma}=Z_{\C}\times_{\sigma} \mathrm{Spec}(\C)$ vanishes on $S^{\sigma}$.
\end{itemize}
\end{thm}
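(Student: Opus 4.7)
The plan is to prove all three equivalences by funneling them through a single global \'etale class $[\nu_{\mathaccent 19 et}]_k \in H^1_{\mathaccent 19 et}(S, \mathcal{F})$, where $\mathcal{F} := R^{2i-1}\pi_*\hat{\Z}(i)$, and establishing that each of (i), (ii), (iii) is equivalent to $[\nu_{\mathaccent 19 et}]_k = 0$. The no-global-sections hypothesis is the central input throughout.

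For $(i) \Leftrightarrow [\nu_{\mathaccent 19 et}]_k = 0$, I would first observe that under the hypothesis, $\nu_Z = 0$ on $S_\C$ is equivalent to the vanishing of its Hodge class $[\nu] \in H^1(S_\C, R^{2i-1}\pi_*\Z(i))$. One direction is tautological; conversely, an integral splitting of (\ref{varh}) (which exists iff $[\nu] = 0$) need not a priori lie in $F^0$, but the obstruction to promoting it to an $F^0$-section is a global section of $R^{2i-1}(\pi_\C)_*\C$, which vanishes by hypothesis, so the splitting is already one of variations of mixed Hodge structures. Proposition \ref{comph} then gives $[\nu] = 0 \Leftrightarrow [\nu_{\mathaccent 19 et}] = 0$ in $H^1(S_\C, \mathcal{F})$. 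Finally, the Leray spectral sequence for $S \to \mathrm{Spec}(k)$, together with $H^0(S_{\bar{k}}, \mathcal{F}) = 0$ (the \'etale form of the hypothesis, via Artin comparison), provides the injection
\[
H^1_{\mathaccent 19 et}(S, \mathcal{F}) \hookrightarrow H^1(S_{\bar{k}}, \mathcal{F}) \cong H^1(S_\C, \mathcal{F}),
\]
so $[\nu_{\mathaccent 19 et}]_k = 0 \Leftrightarrow \nu_Z = 0$. For (iii), the class $[\nu_{\mathaccent 19 et}]_k$ is a purely algebraic invariant over $k$, and for any $\sigma \in \mathrm{Aut}(\C)$ the analogous class for $\nu^\sigma$ on $S^\sigma$ is simply the base change of $[\nu_{\mathaccent 19 et}]_k$ along $\sigma \circ \iota$. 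Since the hypothesis is $\sigma$-invariant in its $\hat{\Z}$-formulation, the same chain applies to $\nu^\sigma$, so $[\nu_{\mathaccent 19 et}]_k = 0 \Leftrightarrow \nu^\sigma = 0$ for every $\sigma$, which is (iii).

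For $(ii) \Leftrightarrow [\nu_{\mathaccent 19 et}]_k = 0$, pulling back the class along any closed point $s \in S(K)$ with $K$ finitely generated recovers $aj_{\mathaccent 19 et}(Z_s) \in H^1(G_K, H^{2i-1}(\overline{X_s}, \hat{\Z}(i)))$ by the variational form of Jannsen's description of the \'etale Abel-Jacobi map: the extension (\ref{varl}) restricts to (\ref{exl}) fiberwise. Hence $[\nu_{\mathaccent 19 et}]_k = 0 \Rightarrow (ii)$ is immediate. The converse is the main obstacle. Since $S$ is normal, the surjection $G_{k(S)} \twoheadrightarrow \pi_1^{\mathaccent 19 et}(S, \bar{\eta})$ yields an injection $H^1_{\mathaccent 19 et}(S, \mathcal{F}) \hookrightarrow H^1(G_{k(S)}, \mathcal{F}_{\bar{\eta}})$, so it suffices to show the class vanishes after restriction to the generic point. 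The plan is to deduce this from the pointwise vanishing by a Chebotarev-type density argument combined with Hilbert irreducibility: decomposition subgroups at closed points of $S$ generate $\pi_1^{\mathaccent 19 et}(S)$ densely, the vanishing of a $1$-cocycle on each such subgroup is precisely the vanishing of $aj_{\mathaccent 19 et}(Z_s)$, and the absence of $G_K$-invariants on $\mathcal{F}_{\bar{\eta}}$ (the same input used in Proposition \ref{inv} via the Weil conjectures) forces the local bounding elements to patch into a global one, rendering the cocycle a coboundary. Implementing this density step precisely, possibly via the refined Galois-cohomological results of Serre in \cite{Se} mentioned in the introduction, is the technical heart of the argument.
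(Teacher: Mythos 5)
Your reduction of (i) and (iii) to the vanishing of a single algebraically defined class, and the use of the no-global-sections hypothesis to inject $H^1_{\mathaccent 19 et}(S,\mathcal F)$ into $H^1(S_{\overline k},\mathcal F)$, is broadly consistent with what the paper does (the paper phrases everything in terms of flat sections and fixed vectors rather than cohomology classes, but the content is the same). However, the step you yourself identify as ``the technical heart'' --- deducing the global vanishing from the pointwise hypothesis (ii) --- is precisely where your sketch would fail. Knowing that a class in $H^1(\pi_1^{\mathaccent 19 et}(S,\overline s),\mathcal F_{\overline s})$ restricts to zero on every decomposition subgroup does \emph{not} imply that it vanishes: this is the failure of the local-global principle for $H^1$ (Tate--Shafarevich-type phenomena), and no amount of \v{C}ebotarev density by itself repairs it. The paper sidesteps this entirely by invoking Terasoma's lemma (\cite{Ter}, th.~2): there exists a \emph{single} closed point $s'$ with value in a number field $L$ such that the image of $G_L$ in $\mathrm{GL}((H_l)_{\overline s})$ coincides with the image of the full group $\pi_1^{\mathaccent 19 et}(S,\overline s)$. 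The splitting of the extension is equivalent to the existence of a \emph{fixed vector} mapping to $1$, and whether a vector is fixed depends only on the image of the acting group in $\mathrm{GL}((H_l)_{\overline s})$; so the $G_L$-fixed vector supplied by (ii) at $s'$ is automatically $\pi_1^{\mathaccent 19 et}(S,\overline s)$-fixed. This converts an $H^1$-statement, where your density argument breaks down, into an $H^0$-statement, where it is trivial. Without this (or an equivalent device), your proof of $(ii)\Rightarrow(i)$ is incomplete. (Serre's results in \cite{Se}, \cite{Se2}, which the paper does use, are reserved for Theorem \ref{inter}(ii) and require extra input --- Frobenius weights --- to overcome exactly this local-global obstruction; they are not an off-the-shelf fix here.)

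A secondary imprecision: in passing from the splitting of the \emph{local system} \eqref{varh} to the vanishing of $\nu_Z$, you assert that ``the obstruction to promoting it to an $F^0$-section is a global section of $R^{2i-1}(\pi_\C)_*\C$.'' As stated this is not correct: given a flat integral section $\sigma$ mapping to $1$, the element $\alpha_s$ with $\sigma(s)+\alpha_s\in F^0H_{s,\C}$ is not flat, so the resulting normal function is not literally a flat section of the weight $-1$ part. What is needed is either Griffiths' theorem that a normal function with zero Hodge class is constant in the fixed part of the Jacobian fibration (\cite{Grif2}), or, as the paper does, the theorem of the fixed part for variations of mixed Hodge structures (Steenbrink--Zucker, \cite{SZ1}, th.~4.1), which shows that $H^0(S_\C,H\otimes\Q)\simeq\Q$ is a constant sub-variation and hence that the flat section mapping to $1$ is a Hodge class. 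You should also address the passage from a rational to an integral splitting, which the paper handles by comparing with the $l$-adic splittings for every $l$.
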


\bigskip

\begin{proof}[P\textbf{roof of $(i)\Rightarrow (ii)$}]
Fix a point $s$ of $S$ with value in a finitely generated field $L$, and let $\overline{s}$ be a complex point of $S$ lying over $s$. Under our hypothesis, we have an injective map $(H_{\mathaccent 19 et})_{\overline{s}}^{\pi_1^{\mathaccent 19 et}(S_{\C}, \overline{s})}\ra \hat{\Z}$. This is actually an isomorphism. Indeed, Baire's theorem applied to the locus of Hodge classes of $H$ in $S_{\C}$ mapping to $1$ in $\Z$ shows that in order for $S_{\C}$ to be equal to this locus, which is a countable union of analytic subvarieties, there has to be a nonzero global section of $H$ which is a Hodge class in every fiber of $H$ -- and maps to $1$ in $\Z$. The image in $H_{\overline{s}}\otimes\hat{\Z}=(H_{\mathaccent 19 et})_{\overline{s}}$ of this section lies in $(H_{\mathaccent 19 et})_{\overline{s}}^{\pi_1^{\mathaccent 19 et}(S_{\C}, \overline{s})}$ and maps to $1$ in $\hat{\Z}$.

Now let $G_L$ be the absolute Galois group of $L$. We have an exact sequence $1\ra \pi_1^{\mathaccent 19 et}(S_{\C}, \overline{s}) \ra \pi_1^{\mathaccent 19 et}(S\times \mathrm{Spec}(L), \overline{s}) \ra G_L\ra 1$, together with a splitting of this exact sequence. The full algebraic fundamental group acts on $(H_l)_{\overline{s}}$, and the map $(H_{\mathaccent 19 et})_{\overline{s}}\ra \hat{\Z}$ is equivariant with respect to the trivial action on $\hat{\Z}$. It follows that the group $G_L$ acts trivially on $(H_{\mathaccent 19 et})_{\overline{s}}^{\pi_1^{\mathaccent 19 et}(S_{\C}, \overline{s})}\xrightarrow{\sim} \hat{\Z}$. This proves that the \'etale Abel-Jacobi invariant of $Z_s$ is zero.
\end{proof}

\bigskip

\begin{proof}[P\textbf{roof of $(ii)\Rightarrow (iii)$}]

It is enough to prove the case where $\sigma$ is the identity. Fix a prime number $l$, and denote by $H_l$ the $l$-adic part of $H_{\mathaccent 19 et}$ Let $\overline{s}$ be a geometric point of $S$. Using the same notation as in the previous proof, the algebraic fundamental group $\pi_1^{\mathaccent 19 et}(S, \overline{s})$ acts on $(H_{\mathaccent 19 et})_{\overline{s}}$. For any point $s'$ of $S$ with value in a field $L$, the absolute Galois group $G_L$ of $L$ maps into $\pi_1^{\mathaccent 19 et}(S, \overline{s})$. According to a lemma by Terasoma appearing in \cite{Ter}, theorem 2, there exists such an $L$-valued point $s'$, with $L$ a number field, such that $G_L$ and $\pi_1^{\mathaccent 19 et}(S, \overline{s})$ have the same image in the linear group GL$((H_{l})_{\overline{s}})$. Since by assumption $G_L$ fixes an element mapping to $1\in \Z_l$, we get an element of $(H_l)_{\overline{s}}$, mapping to $1\in \Z_l$, which is fixed by the whole monodromy group. In other words, the $l$-adic part of the exact sequence (\ref{varl}) splits over $S$. This being true for any prime number $l$, the exact sequence (\ref{varl}) splits over $S$.

 This means that the local system $H_{\mathaccent 19 et}$ on $S$ has a nonzero global section. As a consequence, $H^0(S_{\C}, H\otimes\Q)\neq 0$, and as before we get an isomorphism $H^0(S_{\C}, H\otimes\Q)\simeq \Q$ as local systems, the map being induced by the morphism $H\otimes\Q\ra \Q$ of variations of mixed Hodge structures over $S_{\C}$. It is a result of Steenbrink and Zucker in \cite{SZ1}, th. 4.1\footnote{This theorem is a generalization of Deligne's global invariant cycles theorem, which is a fundamental tool of \cite{Voabs}.} that the $H^0(S_{\C}, H\otimes\Q)$ carries a canonical mixed Hodge structure which makes it a constant subvariation of mixed Hodge structures of $H$\footnote{The result of Steenbrink and Zucker is stated for variations of mixed Hodge structures of geometric origin -- which is our case -- over a one-dimensional base. The fact that $H^0(S_{\C}, H\otimes\Q)$ carries a canonical Hodge structure for $S$ of any dimension is straightforward by restricting to a curve which is an intersection of hyperplane sections and using Lefschetz' hyperplane theorem.}. The isomorphism of $H^0(S_{\C}, H\otimes\Q)$ with $\Q$ is a morphism of mixed Hodge structures, which proves $H^0(S_{\C}, H\otimes\Q)$ consists of Hodge classes.

 This shows that the exact sequence (\ref{varh}) of variations of mixed Hodge structures splits rationally. We want to prove that it splits over $\Z$. We just proved that a splitting of the subjacent extension of local systems over $S_{\C}$ gives a splitting of (\ref{varh}), so we just have to prove that the exact sequence of local systems splits.

 Let $\alpha\in H^0(S_{\C}, H\otimes\Q)$ be the class mapping to $1\in \Q$. For any prime number $l$, the image of $\alpha\in H^0(S_{\C}, H\otimes\Q)$ is the only class mapping to $1\in \Q_l$, which shows that this image belongs to $H^0(S_{\C}, H\otimes\Z_l)$, since the exact sequence (\ref{varl}) is split over $S_{\C}$. The only way for this to be true is that $\alpha$ belongs to $H^0(S_{\C}, H)$, which precisely means that the exact sequence we are considering splits.
\end{proof}

\bigskip

\begin{proof}[P\textbf{roof of $(iii)\Rightarrow (i)$}]
This is obvious.
\end{proof}

\bigskip

Let us now use the notations of the introduction. The equivalence $(i)\Leftrightarrow (ii)$ we just proved readily implies theorem \ref{gros} by restriction to the component $T$ of the zero locus of $\nu_Z$, which is assumed to be algebraic, under the assumption that the local system $R^{2i-1}(\pi_{\C})_* \C$ has no nonzero global section.

\bigskip

\begin{proof}[P\textbf{roof of theorem \ref{df}}]
Let $\overline{k}$ be an algebraic closure of $k$, and let $T'$ be the Zariski-closure of $T(\C)$ in the $k$-scheme $S$. The previous theorem shows that the orbit of $T(\C)$ in $S$ under the action of the Galois group Aut$(\C/k)$ is included in the zero locus of $\nu_Z$. Since this orbit is dense in $T'(\C)$ for the usual topology, it follows that $\nu_Z$ vanishes on $T'(\C)$. By assumption, $T$ is an irreducible component of the zero locus of $\nu_Z$. It follows that $T$ is an irreducible component of the algebraic variety $T'$ defined over $k$, which proves that $T$ is defined over a finite extension of $k$.

This shows that for any automorphism $\sigma$ of $\C$ fixing $k$, the set $\sigma(T(\C))$ is included in the zero locus of $\nu_Z$. Now consider the subscheme $T^{\sigma}$ of $S$, which has $\sigma(T(\C))$ as set of complex points. We just showed that its reduced subscheme is included in the zero locus of $\nu_Z$ as an analytic subvariety, and it is irreducible. Let $V$ be the irreducible component of the zero locus of $\nu_Z$ containing $\sigma(T(\C))$. We want to show that $V=T^{\sigma}$ as analytic varieties. Let $n$ be a nonnegative integer. We can consider the artinian local rings $\mathcal{O}_{V, \sigma(t)}/\mathfrak{m}_{\sigma(t)}^n$ and $\mathcal{O}_{T^{\sigma}, \sigma(t)}/\mathfrak{m}_{\sigma(t)}^n$, $\mathfrak{m}_{\sigma(t)}^n$ denoting indifferently the maximal ideals of both local rings.

The rings $\mathcal{O}_{T^{\sigma}, \sigma(t)}/\mathfrak{m}_{\sigma(t)}^n$ and $\mathcal{O}_{T, t}/\mathfrak{m}_{t}^n$ are canonically isomorphic, because the schemes $T$ and $T^{\sigma}$ are. On the other hand, we can explicitly describe $\mathcal{O}_{T, t}/\mathfrak{m}_{t}^n$ and $\mathcal{O}_{V, \sigma(t)}/\mathfrak{m}_{\sigma(t)}^n$, as loci of Hodge classes, using the Gauss-Manin connection on $\mathcal{H}$ and Griffiths transversality. This is explained in \cite{Voib} in the case of pure Hodge structures, and explicitly stated for $n=1$, see lemma 17.16. Our case follows \emph{mutatis mutandis}. As a consequence, since the Gauss-Manin connection is algebraic, see \cite{KO}, we have a canonical isomorphism between $\mathcal{O}_{T, t}/\mathfrak{m}_{t}^n$ and $\mathcal{O}_{V, \sigma(t)}/\mathfrak{m}_{\sigma(t)}^n$.

This discussion shows that $\mathcal{O}_{T^{\sigma}, \sigma(t)}/\mathfrak{m}_{\sigma(t)}^n$ and $\mathcal{O}_{V, \sigma(t)}/\mathfrak{m}_{\sigma(t)}^n$ are isomorphic as subrings of $\mathcal{O}_{S, \sigma(t)}/\mathfrak{m}_{\sigma(t)}^n$. Since this holds for all $n$, and since the reduced subscheme of $T^{\sigma}$ is included in $V$, we get an equality $V=T^{\sigma}$, which shows that $T^{\sigma}$ is an irreducible component of the zero locus of $\nu_Z$.
\end{proof}

\subsection{Application}

As in \cite{Voabs}, there are many situations where one can easily check that the conditions of theorems \ref{gros} and \ref{df}. Let us give one example.

\begin{thm}\label{CY}
Let $\pi : X\ra S$ be a smooth projective family of complex Calabi-Yau threefolds over a quasiprojective base such that the induced map from $S$ to the corresponding moduli space is finite, and let $Z\hookrightarrow X$ be a flat family of curves in $X$ which are homologous to $0$ in the fibers of $\pi$. Assume everything is defined over a finitely generated field $k$. Let $\nu$ be the associated normal function.
\begin{itemize}
 \item[(i)] Let $T$ be an irreducible component of the zero locus of $\nu$ which is algebraic. Assume that $T$ is of positive dimension and that for a general complex point $t$ of $T$, the intermediate jacobian $J^2(X_t)$ has no abelian factor. Then $T$ is defined over a finite extension of $k$, all its conjugate are irreducible components of the zero locus of $\nu$, and for every closed point $t$ of $T$ with value in a finitely generated field, the \'etale Abel-Jacobi invariant of $Z_t$ is zero.
 \item[(ii)] Let $T$ be a subvariety of $S$ of positive dimension defined over a finitely generated field. Assume that for a general complex point $t$ of $T$, the intermediate jacobian $J^2(X_t)$ has no abelian factor and that for any point $t$ of $T$ with value in a finitely generated field, the \'etale Abel-Jacobi invariant of $Z_t$ is zero. Then $\nu$ vanishes on $T$.
\end{itemize}
\end{thm}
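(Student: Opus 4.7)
The plan is to reduce both parts of Theorem~\ref{CY} to Theorems~\ref{gros} and~\ref{df} by verifying their common hypothesis that $R^{3}\pi_{\C *}\C$ has no nonzero global sections over the relevant base (note that $i=2$ here since $Z$ is a family of curves in a threefold). For part~(i), once this is checked over $T$, Theorem~\ref{df} yields that $T$ is defined over a finite extension of $k$ with all its conjugates irreducible components of the zero locus of $\nu$, and Theorem~\ref{gros}(i) gives the vanishing of the \'etale Abel-Jacobi invariant of $Z_t$ at every finitely-generated-field valued point $t$ of $T$. For part~(ii), I apply Theorem~\ref{gros}(ii) after restricting $\pi:X\ra S$ and $Z$ to the base $T$; the hypothesis that \'etale Abel-Jacobi invariants vanish at finitely-generated-field valued points is stronger than the number-field version required there, so the conclusion is that $\nu_Z$ vanishes identically on $T_{\C}$.

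The heart of the proof is therefore the following claim: if $T'\subseteq S$ is a positive-dimensional irreducible subvariety such that $J^2(X_t)$ has no abelian factor for a general complex point $t\in T'$, then $H^0(T'_{\C}, R^3\pi_{\C *}\C)=0$. Suppose for contradiction that there is a nonzero global section. By the theorem of the fixed part of Steenbrink--Zucker cited in Section~3.1, the monodromy-invariant subspace $V\subseteq R^3\pi_{\C*}\Q|_{T'}$ underlies a canonical constant sub-variation of pure Hodge structures of weight $3$, and by semisimplicity of polarizable variations it is a direct summand: $R^3\pi_{\C*}\Q|_{T'}\simeq V\oplus W$ as variations of Hodge structures.

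I then proceed by case analysis on the Hodge types of $V_t$ at a general $t\in T'$. Since $h^{3,0}(X_t)=1$ and $V_t$ is a real sub-Hodge structure, the intersection $V_t\cap H^{3,0}(X_t)$ is either $0$ (in which case also $V^{0,3}_t=0$) or equal to $H^{3,0}(X_t)$ (in which case $V_t^{0,3}=H^{0,3}(X_t)$). In the first case, $V_t$ has Hodge types in $\{(2,1),(1,2)\}$ only, so it corresponds to a nontrivial abelian subvariety of $J^2(X_t)$, contradicting the hypothesis on the general fiber. In the second case, the complementary summand $W_t$ has vanishing $(3,0)$ and $(0,3)$ components, and if $W\neq 0$ the same reasoning applied to $W_t$ yields an abelian factor of $J^2(X_t)$, again a contradiction. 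The only remaining possibility is $W=0$, in which case $R^3\pi_{\C*}\Q|_{T'}$ itself is constant as a variation of Hodge structures, forcing $H^{3,0}(X_t)$ to be a fixed line along $T'$. But the restriction to $T'$ of the finite moduli map $S\ra M$ has positive-dimensional image, and local Torelli for Calabi-Yau threefolds (a consequence of Bogomolov--Tian--Todorov unobstructedness together with $h^{3,0}=1$) shows that $H^{3,0}(X_t)$ must vary nontrivially in $\mathbb{P}(H^3(X_t,\C))$ as $t$ moves in $T'$, a contradiction.

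The main obstacle is this last case, where one must exclude that the entire variation $R^3\pi_{\C*}\Q|_{T'}$ is constant; this is the only point at which the Calabi-Yau threefold geometry (through local Torelli) enters, the preceding cases using only linear algebra on Hodge types combined with the no-abelian-factor hypothesis. Once the no-global-sections claim is in hand, both parts of Theorem~\ref{CY} follow immediately from Theorems~\ref{gros} and~\ref{df} as outlined above.
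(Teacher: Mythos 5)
Your proposal is correct and follows essentially the same route as the paper: both reduce Theorem \ref{CY} to Theorems \ref{gros} and \ref{df} by showing that the fixed part of $R^3{\pi_{\C}}_*\C$ over $T$ vanishes, combining the theorem of the fixed part, the observation that a fixed part of Hodge type $\{(2,1),(1,2)\}$ would produce an abelian factor of $J^2(X_t)$, and (infinitesimal) Torelli for Calabi-Yau threefolds. The only cosmetic difference is that the paper invokes infinitesimal Torelli directly to rule out a $(3,0)$-component of the fixed part, whereas you reach the same conclusion via a case analysis involving the complementary summand $W$.
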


\begin{proof}
 In order to apply our preceding results, we only have to check that in both situations above, the local system $R^3 \pi_* \Z$ has no global section over $T(\C)$. First of all, since the restriction of $\pi$ to $T$ is a nontrivial family of Calabi-Yau threefolds, the Hodge structure on $H^0(T(\C), R^3 \pi_* \Z)$ is of type $\{(2,1), (1,2)\}$. Indeed, the infinitesimal Torelli theorem for Calabi-Yau varieties, see \cite{Voib}, th. 10.27, shows that the fixed part of $R^3 \pi_* \Z$ cannot have a part of type $(3,0)$. Now this proves that the invariant part of $R^3 \pi_* \Z$ corresponds to a constant abelian subvariety of the Jacobian fibration $J^2(X_T/T)$, which has to be zero by assumption. This shows that the local system $R^3 \pi_* \Z$ has no global section.
\end{proof}

\subsection{Proof of theorem \ref{inter}}

Let us now prove theorem \ref{inter}. The first part uses Terasoma's lemma and classical results about normal functions and their infinitesimal behavior.

\begin{proof}[P\textbf{roof of $(i)$}]
In the situation of the theorem, we can use Terasoma's lemma as before to see that the exact sequence $\nu_{\mathaccent 19 et}$ of $\hat{\Z}$-sheaves on $S$ associated to $Z\hookrightarrow X$ is split, which implies that $[\nu_{\mathaccent 19 et}]$ is zero, and shows that the Hodge class $[\nu_Z]$ of $\nu_Z$ is zero by proposition \ref{comph}.

According to fundamental results of Griffiths, see \cite{Grif2}, a normal function with zero Hodge class is constant in the fixed part of the intermediate Jacobian. In our case, since $\nu_Z$ vanishes at some complex point of $S$, this shows that $\nu_Z=0$.
\end{proof}

\bigskip

The proof of the next theorem, which is the remaining part of theorem \ref{inter}, is quite different in spirit from what we just did, as it includes deeper number-theoretic arguments. In light of the last proof, one should consider it as an analogue of the important fact that the Hodge class of a normal function determines it up to a constant in the invariant part of the intermediate Jacobian. We feel that it might be possible to use such ideas to prove stronger results on the \'etale Abel-Jacobi map.

\begin{thm}
Let $T$ be an irreducible component of the zero locus of $\nu_Z$. Assume that $T$ is algebraic and let $k$ be a finitely generated field over which $T$ is defined. Assume that there exists a point $t$ of $T$ with value in a finitely generated field such that $aj_{\mathaccent 19 et}(Z_t)$ is zero. Then for every point of $T$ with value in a finitely generated field, $aj_{\mathaccent 19 et}(Z_t)$ is zero.
\end{thm}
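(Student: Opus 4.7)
The plan is to exhibit the étale class $[\nu_{\mathaccent 19 et}]|_T \in H^1_{\mathaccent 19 et}(T, \mathcal{F})$, where $\mathcal{F} := R^{2i-1}\pi_*\hat{\Z}(i)|_T$, as coming from a single class $\beta \in H^1(G_k, W)$ with $W := H^0(T_{\overline{k}}, \mathcal{F})$, and then to observe that the restriction of $\beta$ to any sufficiently large finitely generated extension $L/k$ is detected faithfully by evaluation at any single $L$-point of $T$. Replacing $k$ by a finite extension if necessary, I assume $T$ is geometrically irreducible over $k$, so that $W$ injects into every stalk $\mathcal{F}_{\overline{t}}$.

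First I produce $\beta$. Since $\nu_Z$ vanishes on $T_{\C}$, the associated variation of mixed Hodge structures splits there, so the Hodge class $[\nu]|_T$ vanishes in $H^1(T_{\C}, R^{2i-1}\pi_*\Z(i))$. Proposition \ref{comph} then yields $[\nu_{\mathaccent 19 et}]|_{T_{\C}} = 0$, and by smooth base change for constructible étale sheaves (which identifies $H^1(T_{\overline{k}}, \mathcal{F})$ with $H^1(T_{\C}, \mathcal{F})$), this class is already zero in $H^1(T_{\overline{k}}, \mathcal{F})$. The low-degree part of the Hochschild-Serre spectral sequence
\[ 0 \ra H^1(G_k, W) \ra H^1_{\mathaccent 19 et}(T, \mathcal{F}) \ra H^1(T_{\overline{k}}, \mathcal{F})^{G_k} \]
then produces the desired $\beta$, uniquely.

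The key step is a weight-theoretic injectivity. For any $t \in T(L)$ with $L\supset k$ finitely generated, specialization of $[\nu_{\mathaccent 19 et}]|_T$ at $t$ is $aj_{\mathaccent 19 et}(Z_t) \in H^1(G_L, \mathcal{F}_{\overline{t}})$, and by the naturality of Hochschild-Serre it coincides with the image of $\beta|_L$ under the $G_L$-equivariant inclusion $W \hookrightarrow \mathcal{F}_{\overline{t}} = H^{2i-1}(\overline{X_t}, \hat{\Z}(i))$ (evaluation at a geometric point above $t$). I claim $H^1(G_L, W) \to H^1(G_L, \mathcal{F}_{\overline{t}})$ is injective. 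Indeed, the quotient $\mathcal{F}_{\overline{t}}/W$ is $l$-adically pure of weight $-1$ for every prime $l$, being a quotient of the weight $-1$ module $\mathcal{F}_{\overline{t}}$; by the Weil conjectures, exactly as in the proof of Proposition \ref{inv}, it has no $G_L$-invariants, so the connecting map from $H^0(G_L, \mathcal{F}_{\overline{t}}/W)$ to $H^1(G_L, W)$ vanishes.

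To conclude, pick a finitely generated field $L'' \supset k$ containing the residue fields of both $t_0$ and a chosen target point $t$. By Proposition \ref{inv} the hypothesis $aj_{\mathaccent 19 et}(Z_{t_0}) = 0$ persists after base change to $L''$; the injectivity above applied at $t_0$ yields $\beta|_{L''} = 0$, whence $aj_{\mathaccent 19 et}(Z_{t, L''})$ is the image of $0$ and so vanishes. One more application of Proposition \ref{inv} gives $aj_{\mathaccent 19 et}(Z_t) = 0$. The heart of the argument, which justifies the reference to Serre-type results on $l$-adic representations in the introduction, is the weight input $(\mathcal{F}_{\overline{t}}/W)^{G_L} = 0$; morally, this is the étale analogue of the Hodge-theoretic fact that a normal function with vanishing Hodge class is constant in the invariant part of the intermediate Jacobian.
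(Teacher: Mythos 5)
Your argument is correct, but it follows a genuinely different route from the paper's. Both proofs start the same way: the vanishing of $\nu_Z$ on $T$ kills the Hodge class $[\nu]|_T$, hence (by Proposition \ref{comph} and comparison) the extension of $\hat{\Z}$-sheaves splits over $T_{\overline{k}}$, so its class over $T$ is the inflation of a single class $\beta\in H^1(G_k,W)$ with $W=H^0(T_{\overline{k}},\mathcal{F})=\mathcal{F}_{\overline{t}}^{\pi_1(T_{\overline{k}})}$. From there the paper proves the \emph{global} splitting of the sheaf extension over the $k$-scheme $T$, i.e.\ $\beta=0$, and for this it invokes Serre's local-to-global criterion (splitting of an extension of $l$-adic $G_k$-modules follows from splitting at almost all Frobenius elements, via \v{C}ebotarev and Lie-algebra arguments in \cite{Se}, \cite{Se2}), checking the local condition at each good place by the weight $-1$ argument. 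You instead never try to show $\beta=0$: you only track its restrictions to points, and the whole weight of the argument rests on the observation that $H^1(G_L,W)\ra H^1(G_L,\mathcal{F}_{\overline{t}})$ is injective because $(\mathcal{F}_{\overline{t}}/W)^{G_L}=0$ for weight reasons --- the same input as Proposition \ref{inv} and nothing more. This is enough to transport the vanishing from $t_0$ to any other finitely generated point through a common overfield, so your proof is more elementary and avoids Serre entirely; what it nominally gives up is the global splitting over $T$, although even that is recoverable, since the same weight argument (applied as in Proposition \ref{inv} to $W$ itself, which has no $G_{L''}$-invariants) shows $H^1(G_k,W)\ra H^1(G_{L''},W)$ is injective, whence $\beta=0$. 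Two small points of hygiene: the phrase ``the associated variation of mixed Hodge structures splits there'' overstates what you use --- what you need and what is true is only that $[\nu]|_T=\delta(1)=0$, i.e.\ the underlying extension of local systems splits; and the identification $H^1(T_{\overline{k}},\mathcal{F})\simeq H^1(T_{\C},\mathcal{F})$ is better justified by the invariance of $\pi_1^{\mathaccent 19 et}$ under extension of algebraically closed fields of characteristic zero (the classes in play live in $H^1$ of lisse, non-torsion sheaves, so ``smooth base change for constructible sheaves'' is not quite the right citation). Neither affects the validity of the argument.
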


\begin{proof}
 Any specialization of $t$ satisfies the assumption of the theorem, so we can assume that $t$ is a closed point. Up to a base change and using proposition \ref{inv}, we can assume that $T$ has value in the base field $k$. We need to prove that the exact sequence (\ref{varl}) splits over $T$. It suffices to prove that its $l$-adic part splits over $T$ for any prime number $l$. Fix a prime number $l$ and denote once again by $H_l$ the $l$-adic part of $H_{\mathaccent 19 et}$. Let $\overline{t}$ be a geometric point of $T$ lying over $t$. We have an exact sequence
$$1\ra \pi_1^{\mathaccent 19 et}(T_{\C}, \overline{t}) \ra \pi_1^{\mathaccent 19 et}(T, \overline{t}) \ra G_k \ra 1.$$
The last arrow admits a section $\sigma$ coming from the rational point $t$. Now by assumption, the following sequence is exact.
\begin{equation}\label{fix}
 0\ra (H^{2i-1}(X_{\overline{t}}, \Z_l(i)))^{\pi_1^{\mathaccent 19 et}(T_{\C}, \overline{t})} \ra (H_{l, \overline{t}})^{\pi_1^{\mathaccent 19 et}(T_{\C}, \overline{t})} \ra \Z_l \ra 0.
\end{equation}
Indeed, the vanishing of $\nu_Z$ implies that there exists a global section of $H$ over $T_{\C}$ mapping to $1$ in $\Z$, which implies the surjectivity of the last arrow.

The Galois group $G_k$ acts on $(H_{l, \overline{t}})^{\pi_1^{\mathaccent 19 et}(T_{\C}, \overline{t})}$, either through $\sigma$ or through the previous exact sequence -- those are the same actions. For every finite place $\mathfrak{p}$ of $k$, fix a Frobenius element $F_{\mathfrak{p}}$ in a decomposition group of $\mathfrak{p}$. It follows from \cite{Se}, prop. 6, 8 and 12, and \cite{Se2}, th. 1, that the extension of $G_k$-modules (\ref{fix}) is split if and only if for almost every finite place $\mathfrak{p}$ of $k$, there exists an element $h$ of $(H_{l, \overline{t}})^{\pi_1^{\mathaccent 19 et}(T_{\C}, \overline{t})}$, mapping to $1$ in $\Z_l$, such that $F_{\mathfrak{p}}(h)=h$. In the first paper, using \v{C}ebotarev's theorem, Serre indeed proves that if (\ref{fix}) splits at almost every finite place, then it comes from an extension of $G$-modules, where $G$ is the image of $G_k$ in GL$( (H^{2i-1}(X_{\overline{t}}, \Z_l(i)))^{\pi_1^{\mathaccent 19 et}(T_{\C}, \overline{t})})$. The second paper proves the splitting using the Lie algebra of $G$.

Assume $\mathfrak{p}$ does not divide $l$ and that $X_t$ has good reduction at $\mathfrak{p}$. Since the exact sequence of $G_k$-modules
$$0\ra H^{2i-1}(X_{\overline{t}}, \Z_l(i)) \ra H_{l, \overline{t}} \ra \Z_l \ra 0$$
is split by assumption, there exists $h'\in H_{l, \overline{t}}$, mapping to $1$ in $\Z_l$, such that $F_{\mathfrak{p}}(h')=h'$. On the other hand, since $F_{\mathfrak{p}}$ acts trivially on $\Z_l$ and has weight $-1$ on $(H^{2i-1}(X_{\overline{t}}, \Z_l(i)))^{\pi_1^{\mathaccent 19 et}(T_{\C}, \overline{t})}$ by the Weil conjectures, there exists $h\in (H_{l, \overline{t}})^{\pi_1^{\mathaccent 19 et}(T_{\C}, \overline{t})}\otimes\Q$, mapping to $1$ in $\Z_l$, such that $F_{\mathfrak{p}}(h)=h$. Since $H^{2i-1}(X_{\overline{t}}, \Z_l(i))^{F_{\mathfrak{p}}}=0$, we have $h'=h$, which shows that $h$ lies in $(H_{l, \overline{t}})^{\pi_1^{\mathaccent 19 et}(T_{\C}, \overline{t})}$. This proves that the exact sequence (\ref{fix}) splits, which implies that the $l$-adic part of the exact sequence (\ref{varl}) is split over $T$ for any prime number $l$, and concludes the proof.
\end{proof}

{\par\vskip.3truein\relax
\leftline{\hskip2truein\relax Fran\c{c}ois Charles}
\leftline{\hskip2truein\relax \'Ecole Normale Sup\'erieure}
\leftline{\hskip2truein\relax 45 rue d'Ulm}
\leftline{\hskip2truein\relax 75\thinspace 230 PARIS CEDEX 05}
\leftline{\hskip2truein\relax FRANCE}
\leftline{\hskip2truein\relax\tt francois.charles@ens.fr}
\relax}

\end{document}